\theoremstyle{plain}
\newtheorem{theorem}{Theorem}[section]
\newtheorem{proposition}[theorem]{Proposition}
\newtheorem{lemma}[theorem]{Lemma}
\newtheorem{corollary}[theorem]{Corollary}
\newtheorem{condition}[theorem]{Condition}
\theoremstyle{remark}
\newtheorem{remark}{Remark}[section]
\theoremstyle{definition}
\begin{document}

\begin{frontmatter}

\title{\normalfont\LARGE
$L^{2}$ dimensions of spaces of braid-invariant harmonic forms}


\author[York]{Alexei Daletskii \corref{cor1}}
\ead{ad557@york.ac.uk}
\author[Kiev]{Alexander Kalyuzhnyi}
\ead{kalyuz@imath.kiev.ua}

\address[York]{Department of Mathematics, University of York, UK}
\address[Kiev]{Institute of Mathematics, NAS, Kiev, Ukraine}

\cortext[cor1]{Corresponding author.}

\begin{abstract}
Let $X$ be a Riemannian manifold endowed with a co-compact isometric
action of an infinite discrete group. We consider $L^{2}$ spaces of harmonic vector-valued forms on the product manifold $X^{N}$, which are invariant with respect to an action of the braid
group $B_N$, and
compute their von Neumann dimensions (\textit{the braided }$L^{2}$-\textit{\
Betti numbers}).
\end{abstract}

\begin{keyword}
\def\MSC{\par\leavevmode\hbox {\it 2010 MSC:\ }}
\def\SC{\par\leavevmode\hbox {\it Subject classification:\ }}
Braid group, von Neumann algebra, $L^{2}$-Betti number, Configuration space

\MSC Primary 58A10, 20F36, 46L10; Secondary 58A12, 58J22
\SC Global Analysis, Analysis on Manifolds
\end{keyword}

\end{frontmatter}

\section{Introduction}

In recent years, there has been a growing interest in the analysis and geometry on
various spaces of configurations (i.e. finite or infinite countable subsets)
in a given topological space $X$, motivated by widespread applications in
various parts of mathematics, including topology, representation theory,
statistical mechanics, quantum field theory, mathematical biology, etc. (see
\cite{Fad}, \cite{VGG}, \cite{GGPS}, \cite{Ge79}, \cite{I}, \cite{AKR3} and
an extensive bibliography in \cite{DVJ1}, \cite{AKR2} and \cite{Rock}). The
development of the general mathematical framework for suitable classes of
configurations is an important and challenging research theme, even in the
case of finite number of components (particles) in each configuration.
Many properties of the spaces of infinite configurations are in turn studied
via the limit transition in the number of particles.

Observe that functions on the space of $N$-particle configurations in $X$ can
be identified with the functions on the Cartesian product $X^{N}$ of $N$
copies of $X$, which are invariant with respect to the natural action of the
symmetric group $S_{N}$ by permutations of arguments. Let us also note that
certain spaces of mappings on $X^{N}$ (e.g. vector-valued functions and
differential forms in the case where $X$ possesses a manifold structure) may
admit more complicated groups of symmetries. An important example of an
extended symmetry group is given by the braid group $B_{N}$, and the spaces
of $B_{N}$-invariant differential forms on $X^{N}$ can be regarded as spaces
differential forms on "\textit{braided configuration spaces}", which are
related to the study of multi-particle systems obeying anyon and plekton
statistics (\cite{Goldin}, \cite{Dell'Antonio}, \cite{GoldinMajid}).

An important and intriguing question in this framework is the relationship
between the properties associated with the multi-particle structure of these
spaces, and the topology and geometry of the underlying space $X$. In many
interesting cases, $X$ is a non-compact Riemannian manifold of infinite
volume, which makes it natural to discuss the above questions in the
framework of the theory of $L^{2}$ invariants. This programme has been
started in \cite{DaS}, \cite{DKal}, \cite{DKal1}, \cite{AD}.

Initiated by M. Atiyah, the theory of $L^{2}$ invariants of non-compact
manifolds possessing infinite discrete groups of isometries serves as a
brilliant example of application of von Neumann algebras in topology,
geometry and mathematical physics. The initial framework introduced in \cite%
{Ati} is as follows. Let $X$ be a non-compact Riemannian manifold admitting
an infinite discrete group $G$ of isometries such that the quotient $K=X/G$
is compact. Then $G$ acts by isometries on the spaces $L^{2}\Omega ^{m}(X)$
of square-integrable $m$-forms over $X$. The projection $P^{(m)}$ onto the
space $\mathcal{H}^{m}(X)$ of square-integrable harmonic $m$-forms (the $m$%
-th $L^{2}$-cohomology space) commutes with the action of $G$ and thus
belongs to the commutant $M_{m}(X)$ of this action, which is (under certain
conditions) a $\mathrm{II}_{\infty }$-factor. The corresponding von Neumann
trace $\mathrm{Tr}_{M_{m}}P^{(m)}=:\beta _{m}(X)$ gives a regularized
dimension of the space $\mathcal{H}^{m}(X)$ and is called the $L^{2}$-Betti
number of $X$ (or $K$). Since the pioneering work \cite{Ati}, $L^{2}$-Betti
numbers and other $L^{2}$ invariants have been studied by many authors (see
e.g. \cite{Mat} and references given there).

The construction above covers in an obvious way the case of the product
manifold $X^{N}$ endowed with the natural action of the product group $G^{N}$%
. The commutant $M_{m}(X^{N})$ of the corresponding action in the space $%
L^{2}\Omega ^{m}(X^{N})$ contains the von Neumann subalgebra $
\bigoplus_{k_{1}+\dots +k_{N}=m}M_{k_{1}}(X)\otimes
...\otimes M_{k_{N}}(X),
$
and the K\"{u}nneth formula shows that
\begin{equation*}
\beta _{m}(X^{N})=\sum_{k_{1}+\dots +k_{N}=m}\beta _{k_{1}}(X)\dots \beta
_{k_{N}}(X).
\end{equation*}

The situation becomes much more complicated if we consider the spaces of
forms on $X^{N}$ admitting additional symmetries, for instance, invariant
with respect to the action of the symmetric group $S_{N}$ by permutations of
arguments. In this case, the projection $\mathbf{P}_{N}^{(m)}$ onto the
space of $S_{N}$-invariant harmonic forms \textit{does not} in general%
\textit{\ }belong to $M_{m}(X^{N}).$The minimal extension $\left\{
M_{m}(X^{N}),\mathbf{P}_{N}^{(m)}\right\} ^{\prime \prime }$ has been
studied in \cite{DaS}, \cite{DKal}, \cite{DKal1}, \cite{AD}, where the
corresponding trace formulae were derived and applied to the computation of $%
L^{2}$-Betti numbers of finite and infinite configuration spaces.

In the present paper, we consider the space $L_{U}^{2}\Omega
^{m}(X^{N},A_{N})$ of square-integrable $m$-forms on $X^{N}$, which take
values in a finite dimensional Abelian algebra $A_{N}$ and are invariant
with respect to a special representation $U$ of the braid group $B_{N}$. The
representation $U$ is constructed as the tensor product of a given unitary
representation $\pi $ of $B_{N}$ in $A_{N}$ and the natural action $%
\mathfrak{S}$ of $B_{N}$ in $L^{2}\Omega ^{m}(X^{N})$ by permutations.

Let $\mathbf{P}_{N}^{(m)}$ be the projection onto the space $\mathcal{H}%
_{U}^{m}(X^{N},A_{N})$ of harmonic forms in \newline $L_{U}^{2}\Omega
^{m}(X^{N},A_{N})$.
The construction of the (minimal) von Neumann algebra
containing $\mathbf{P}_{N}^{(m)}$ and computation of the trace of $\mathbf{P}%
_{N}^{(m)}$ involves the study of the von Neumann algebra
\begin{equation}
\left\{ M_{m} (X^N)\otimes \mathrm{End}(A_{N}),U\right\} ^{\prime \prime }.
\label{BraidedvonNeumann}
\end{equation}%
We give a description of these von Neumann algebras and study the
space $\mathcal{H} _{U}^{m}(X^{N},A_{N})$
in the situation where $%
A_{N}=\left( \mathbb{C}^{2}\right) ^{\otimes N}$ and the representation $\pi
$ is generated by a $4\times 4$ matrix $C$ satisfying the braid equation%
\begin{equation*}
(C\otimes 1)(1\otimes C)(C\otimes 1)=(1\otimes C)(C\otimes 1)(1\otimes C).
\end{equation*}%
Representations of this type have been extensively studied,
see \cite{Kessel}.

The structure of the paper is as follows. In Sections 2 and 3, we derive
general formulae for the traces on von Neumann algebras of the type (\ref%
{BraidedvonNeumann}). In Sections 4 and 5, we compute the $L^{2}$ dimensions
of the spaces $\mathcal{H}_{U}^{m}(X^{N},A_{N})$. In Section 6, we define
the space $\bigoplus_{N\in \mathbb{N}}\mathcal{H}_{U}^{m}(X^{N},A_{N})$ of "%
\textit{harmonic} $m$\textit{-forms on the braided configuration space}" and
compute the \textit{supertrace} of the corresponding projection (via the
limit transition $N\rightarrow \infty $) in terms of the Euler
characteristic of the underlying manifold $X$ and certain invariant of the
representation $U$.

In what follows we refer to \cite{BrR}, \cite{Takesaki}, \cite{JonesSunder}
for general notions of the theory of von Neumann algebras. We denote by $%
\mathrm{Tr}_{K}$ the faithful normal semifinite trace on a $\mathrm{II}%
_{\infty }$- factor $K$.

\bigskip

\section{Trace formula}

\label{sec1}We start with the review of some well-known facts from the
theory of braid groups that we will need in what follows, see e.g. \cite%
{Artin}, \cite{KesselTuraev}. The braid group $B_{N}$\ is generated by
elements $b_{1},\dots ,b_{N-1}$\ satisfing the commutation relations\textbf{%
\ }%
\begin{eqnarray*}
b_{i}b_{j} &=&b_{j}b_{i},\ \left\vert i-j\right\vert \geq 2, \\
b_{i}b_{j}b_{i} &=&b_{j}b_{i}b_{j},\ \left\vert i-j\right\vert =1.
\end{eqnarray*}%
The symmetric group $S_{N}$ is a quotient group of $B_{N}$ with respect to the
relations $b_{i}^{2}=1$,\newline $i=1,\dots N-1$. We denote by $\phi
:B_{N}\rightarrow S_{N}$ the corresponding canonical homomorphism.

Let us consider the \textit{pure braid} group $B_{N}^{pure}:=\mathrm{Ker~}%
\phi $ (or the group of \textit{color braids}). Define the forgetful
homomorphism $f_{N-1}:B_{N}^{pure}\rightarrow B_{N-1}^{pure}$ which acts on
a pure $N$-braid by forgetting its last string. Observe that the kernel of $%
f_{N-1}$ is isomorphic to the free group $F_{N-1}$. Thus one has the split
exact sequence of groups\textbf{\ }%
\begin{equation}
1\rightarrow F_{N-1}\rightarrow B_{N}^{pure}\rightarrow
B_{N-1}^{pure}\rightarrow 1.  \label{split}
\end{equation}%
Thus the pure braid group can be decomposed into the semidirect product $%
B_{N}^{pure}=F_{N-1}\rtimes B_{N-1}^{pure}$ of the free group $F_{N-1}$ and
the group $B_{N-1}^{pure}$ of pure $N-1$ braids. Iterating the decomposition
procedure one can represent $B_{N}^{pure}$ as the iterated semidirect
product of free groups $F_{1}\simeq \mathbb{Z}$, $F_{2},\ \dots ,F_{N-1}$
(the Artin normal form of $B_{N}^{pure}$). The generators $x_{i,j}$, $1\leq
i\leq j$, of the free group $F_{j}$, $j\leq N-1$, can be represented in the
form
\begin{equation*}
x_{i,j}=b_{j}b_{j-1}\dots b_{i+1}b_{i}^{2}b_{i+1}^{-1}\dots
b_{j-1}^{-1}b_{j}^{-1}.
\end{equation*}%
Therefore the elements $x_{i,j}$, $1\leq i\leq j\leq N-1$, generate $%
B_{N}^{pure}$.

In what follows, we will use the following special representation of the
group $B_{N}$. Let $\mathfrak{S}$ be a unitary representation of the
symmetric group $S_{N}$ in a separable Hilbert space $H$ and $\pi $ be a
unitary representation of $B_{N}$ in a finite dimensional Hilbert space $V$.
Then these representations generates a unitary representation $U$ of the
group $B_{N}$ in Hilbert space $H\otimes V$ by the formula
\begin{equation}
U(b_{i})(\xi \otimes v)=\mathfrak{S}(\phi (b_{i}))\xi \otimes \pi (b_{i})v,
\label{reprBN}
\end{equation}%
where $\xi \in H,\ v\in V$. A direct computation shows that
\begin{equation*}
U(b_{i})U(b_{i+1})U(b_{i})=U(b_{i+1})U(b_{i})U(b_{i+1}),
\end{equation*}%
which implies that formula (\ref{reprBN}) indeed defines a unitary
representation of the group $B_{N}$.

Let us introduce the subspace
\begin{equation}
\left( H\otimes V\right) _{U}=\{f\in H\otimes V|\ U(b_{i})f=f,i=1,\dots
,N-1\}  \label{inv-subs}
\end{equation}%
of $U$-invariant elements of $H\otimes V$ and consider the corresponding
orthogonal projection
\begin{equation*}
P_{U}:H\otimes V\rightarrow \left( H\otimes V\right) _{U}.
\end{equation*}

Let $M$ be a $\mathrm{II}_{\infty }$ factor acting in $H$ and define $%
Q=M\otimes \mathrm{End}(V)$. In general, the projection $P_{U}$ does not in
general belong to $Q$. In what follows, we will extend $Q$ to a suitable $%
\mathrm{II}_{\infty }$ factor containing $P_{U}$. We suppose that the
following condition holds.

\begin{condition}
\label{outer}The operators $\mathfrak{S}(\sigma )$, $\sigma \in S_{N}$, $%
\sigma \neq e$, do not belong to $M$\textbf{.}
\end{condition}

Condition \ref{outer} will be verified in the particular situations
considered in the following sections. It implies that the formula
\begin{equation}
\alpha _{\sigma }(x)=\mathfrak{S}(\sigma )x\mathfrak{S}^{\ast }(\sigma
),\quad x\in M  \label{act}
\end{equation}%
defines a nontrivial outer action
\begin{equation*}
S_{N}\ni \sigma \mapsto \alpha _{\sigma }\in \mathrm{Aut}(M)
\end{equation*}%
of the group $S_{N}$ by automorphisms of the $\mathrm{II}_{\infty }$-factor $%
M$. Consider the corresponding cross-product $M\rtimes _{\alpha }S_{N}$. It
is well known that an automorphism of a factor is free iff it is outer.
Therefore (see e.g. \cite{JonesSunder}, Proposition 1.4.4) $M^{\prime }\cap
(M\rtimes _{\alpha }S_{N})=\mathbb{C}$ so that $M\rtimes _{\alpha }S_{N}$ is
a ($\mathrm{II}_{\infty }$-) factor.

\begin{lemma}
\label{isomorphism}The map
\begin{equation}
i:M\rtimes _{\alpha }S_{N}\ni (x,\sigma )\mapsto xU(\sigma )\in \{M,\{%
\mathfrak{S}(\sigma )\}_{\sigma \in S_{N}}\}^{\prime \prime }  \label{map}
\end{equation}%
is an isomorphism of factors.
\end{lemma}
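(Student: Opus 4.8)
The plan is to realize $i$ as the integrated form of a covariant representation and then deduce injectivity from the factoriality of $M\rtimes _{\alpha }S_{N}$, which has already been established above. First I would observe that, by the defining relation (\ref{act}), each unitary $\mathfrak{S}(\sigma )$ implements the automorphism $\alpha _{\sigma }$ on $M$, i.e. $\mathfrak{S}(\sigma )x\mathfrak{S}(\sigma )^{\ast }=\alpha _{\sigma }(x)$ for $x\in M$, while the representation property of $\mathfrak{S}$ gives $\mathfrak{S}(\sigma )\mathfrak{S}(\tau )=\mathfrak{S}(\sigma \tau )$. Hence the pair $(\mathrm{id}_{M},\mathfrak{S})$ is a covariant representation of the W$^{\ast }$-dynamical system $(M,S_{N},\alpha )$ on $H$, and $i(x,\sigma )=x\mathfrak{S}(\sigma )$ is precisely its integrated form (here $U(\sigma )$ in (\ref{map}) is read as $\mathfrak{S}(\sigma )$, the generator of the target algebra).

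Since $S_{N}$ is finite, every element of $M\rtimes _{\alpha }S_{N}$ is a finite sum $\sum_{\sigma \in S_{N}}x_{\sigma }u_{\sigma }$ with $x_{\sigma }\in M$, so $i$ is defined on the whole algebra by $i(\sum_{\sigma }x_{\sigma }u_{\sigma })=\sum_{\sigma }x_{\sigma }\mathfrak{S}(\sigma )$. This is a unital $\ast $-homomorphism onto $\{M,\{\mathfrak{S}(\sigma )\}\}^{\prime \prime }$, surjectivity being immediate because the image contains $M$ and all the $\mathfrak{S}(\sigma )$. To see that $i$ is normal, I would use the canonical conditional expectation $E\colon M\rtimes _{\alpha }S_{N}\to M$, $E(\sum_{\sigma }x_{\sigma }u_{\sigma })=x_{e}$, which is normal; the Fourier coefficients $x_{\sigma }=E(Tu_{\sigma }^{\ast })$ therefore depend ultraweakly continuously on $T$, and since the sum is finite and right multiplication by $\mathfrak{S}(\sigma )$ is ultraweakly continuous, $T\mapsto i(T)$ is ultraweakly continuous.

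The decisive step is injectivity. The kernel $\ker i$ is an ultraweakly closed two-sided ideal of $M\rtimes _{\alpha }S_{N}$; every such ideal in a von Neumann algebra has the form $(M\rtimes _{\alpha }S_{N})z$ for a central projection $z$. As established above, the outerness guaranteed by Condition \ref{outer} forces $M^{\prime }\cap (M\rtimes _{\alpha }S_{N})=\mathbb{C}$, so $M\rtimes _{\alpha }S_{N}$ is a factor and its only central projections are $0$ and $1$. Since $i(1)=\mathfrak{S}(e)=1\neq 0$, we cannot have $\ker i=M\rtimes _{\alpha }S_{N}$, hence $\ker i=0$ and $i$ is injective. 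Combined with surjectivity and normality this shows that $i$ is an isomorphism of von Neumann algebras, and both sides are factors.

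I expect the main obstacle to be not the ideal argument itself but making sure it is legitimately available, i.e. confirming that $i$ is genuinely normal so that $\ker i$ is ultraweakly closed; the finiteness of $S_{N}$ makes this routine, but without it one would instead have to argue injectivity by hand, proving the freeness estimate that $\sum_{\sigma }x_{\sigma }\mathfrak{S}(\sigma )=0$ forces all $x_{\sigma }=0$ directly from outerness of the $\alpha _{\sigma }$. I would keep this direct freeness computation in reserve as an alternative route in case a self-contained argument is preferred.
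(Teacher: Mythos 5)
Your proof is correct and takes essentially the same approach as the paper: the paper likewise observes that $i$ is a non-trivial surjective normal homomorphism and deduces injectivity from the fact that a factor contains no proper weakly closed two-sided ideals (equivalently, no non-trivial central projections), using the factoriality of $M\rtimes _{\alpha }S_{N}$ established from Condition \ref{outer}. Your write-up simply fills in the details the paper leaves as assertions --- the covariant-representation/integrated-form description of $i$, surjectivity, and normality via the canonical conditional expectation.
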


\begin{proof}
Observe that the map (\ref{map}) is a non-trivial surjective normal
homomorphism. It is well known that factors do not contain proper weakly
closed two-sided ideals, which immediately implies that any homomorphisms
between them is either identically zero or\textbf{\ }injective, and the
result follows.
\end{proof}

Let us define the space
\begin{equation*}
V^{\pi }=\{v\in V\ |\ \pi (\beta )v=v\ \text{for all}\ \beta \in
B_{N}^{pure}\}
\end{equation*}%
of $\pi (B_{N}^{pure})$-invariant elements of $V$.

\begin{lemma}
\label{invarspace}

\begin{itemize}
\item[(i)] The space $V^{\pi }$ is invariant under the representation $\pi $
of the braid group $B_{N}$.

\item[(ii)] The restriction of the representation $\pi $ of $B_{N}$ to the
space $V^{\pi }$ defines a representation $\tilde{\pi}$\ of the symmetric
group $S_{N}$ via the formula
\begin{equation*}
\tilde{\pi}(\varphi (b_{i}))=\pi (b_{i})\upharpoonright V^{\pi },i=1,\dots
,N-1.
\end{equation*}
\end{itemize}
\end{lemma}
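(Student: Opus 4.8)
The plan is to derive both statements from a single structural fact: the pure braid group $B_{N}^{pure}=\ker \phi$ is a \emph{normal} subgroup of $B_{N}$, being the kernel of the homomorphism $\phi :B_{N}\rightarrow S_{N}$.

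For part (i), I would take $v\in V^{\pi }$ and an arbitrary $g\in B_{N}$, and show that $\pi (g)v\in V^{\pi }$. Fix any $\beta \in B_{N}^{pure}$. Since $B_{N}^{pure}$ is normal, $g^{-1}\beta g\in B_{N}^{pure}$, whence $\pi (g^{-1}\beta g)v=v$ by the very definition of $V^{\pi }$. Then $\pi (\beta )\pi (g)v=\pi (g)\pi (g^{-1}\beta g)v=\pi (g)v$. As $\beta $ was arbitrary, this shows $\pi (g)v\in V^{\pi }$, i.e. $V^{\pi }$ is invariant under $\pi (B_{N})$.

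For part (ii), the invariance established in (i) means that $\pi $ restricts to a genuine representation $g\mapsto \pi (g)\upharpoonright V^{\pi }$ of $B_{N}$ on the subspace $V^{\pi }$. By the definition of $V^{\pi }$, this restricted representation is trivial on $B_{N}^{pure}=\ker \phi $. Since $\phi :B_{N}\rightarrow S_{N}$ is surjective with kernel $B_{N}^{pure}$, the universal property of the quotient yields a unique representation $\tilde{\pi}$ of $S_{N}\cong B_{N}/B_{N}^{pure}$ satisfying $\tilde{\pi}\circ \phi =\pi \upharpoonright V^{\pi }$; in particular $\tilde{\pi}(\phi (b_{i}))=\pi (b_{i})\upharpoonright V^{\pi }$, which is exactly the asserted formula.

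The one point deserving care is the well-definedness of $\tilde{\pi}$, namely that the operators $\pi (b_{i})\upharpoonright V^{\pi }$ satisfy the defining relations of $S_{N}$ and not merely those of $B_{N}$. The braid relations are inherited automatically from the fact that $\pi $ is a representation of $B_{N}$; the crucial extra relation is $\tilde{\pi}(\phi (b_{i}))^{2}=\mathrm{id}$, reflecting $\phi (b_{i})^{2}=e$ in $S_{N}$. This holds because $b_{i}^{2}=x_{i,i}$ is a generator of $B_{N}^{pure}$ (the case $j=i$ of the displayed formula for the $x_{i,j}$), so $\pi (b_{i}^{2})$ acts as the identity on $V^{\pi }$ and $(\pi (b_{i})\upharpoonright V^{\pi })^{2}=\mathrm{id}_{V^{\pi }}$. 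Hence $\tilde{\pi}$ is a bona fide (unitary) representation of $S_{N}$, completing the proof.
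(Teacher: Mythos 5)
Your proof is correct, and for part (i) it takes a genuinely different route from the paper's. The paper argues by contradiction and by explicit computation with the Artin generators $x_{i,j}=b_{j}b_{j-1}\dots b_{i+1}b_{i}^{2}b_{i+1}^{-1}\dots b_{j-1}^{-1}b_{j}^{-1}$ of $B_{N}^{pure}$: assuming $V^{\pi}$ is not $\pi(b_{N-1})$-invariant, it sets $w=\pi(b_{N-1})v$ and checks, generator by generator and in several cases, that $w$ is fixed by every $\pi(x_{i,j})$, hence lies in $V^{\pi}$ --- in effect verifying by hand exactly the conjugation relations $b^{-1}x_{i,j}b\in B_{N}^{pure}$ that you invoke abstractly. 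Your observation that $B_{N}^{pure}=\ker\phi$ is normal in $B_{N}$ (immediate from the paper's own definition of $B_{N}^{pure}$ as a kernel) reduces (i) to the standard one-line fact that the fixed space of a normal subgroup is invariant under the whole group; this is shorter, avoids the case analysis, and is strictly more general, since it uses no structure of the braid group at all. For part (ii) the two arguments essentially coincide: the paper notes $\pi(b_{i}^{2})\upharpoonright V^{\pi}=\mathrm{id}_{V^{\pi}}$ and appeals to the presentation of $S_{N}$, while you phrase the same point via the universal property of the quotient $S_{N}\cong B_{N}/\ker\phi$; your remark that $b_{i}^{2}=x_{i,i}\in B_{N}^{pure}$ (equivalently, $b_{i}^{2}\in\ker\phi$ since $\phi(b_{i})^{2}=e$) is precisely the ingredient the paper uses. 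The only thing the paper's computational route buys is hands-on familiarity with the generators $x_{i,j}$, which reappear in the explicit examples later in the paper; what yours buys is brevity, robustness, and generality.
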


\begin{proof}
\textsl{(i)}. It is sufficient to show that the subspace $V^{\pi }$ is
invariant under the action of operators $\pi (b_{i})$, $i=1\dots ,N-1$. Let
us suppose that $V^{\pi }$ is not invariant w.r.t. $\pi (b_{N-1})$. Then
there exists an element $w\in \pi (b_{N-1})V^{\pi }$ orthogonal to $V^{\pi }$%
. Let $w=\pi (b_{N-1})v$, where $v\in V^{\pi }$. The equality $\pi
(b_{N-1}^{2})v=v$ implies that $\pi (b_{N-1})w=v$ and $\pi (b_{N-1}^{2})w=w$%
. Then
\begin{equation*}
v=\pi (b_{N-1})\pi (b_{N-2}^{2})\pi (b_{N-1}^{-1})v=\pi (b_{N-1})\pi
(b_{N-2}^{2})w,
\end{equation*}%
so that we have $\pi (b_{N-2}^{2})w=w$. The relations $%
b_{N-1}b_{k}=b_{k}b_{N-1}$, $k\leq N-3$, and $\pi (x_{i,j})v=v$ imply that
\begin{equation*}
\pi (x_{i,k})w=\pi (x_{i,k})\pi (b_{N-1})v=\pi (b_{N-1})\pi (x_{i,k})v=\pi
(b_{N-1})v=w
\end{equation*}%
for all $1\leq i\leq k\leq N-3$. We have also
\begin{eqnarray*}
\pi (x_{i,N-1})w &=&\pi (b_{N-1})\pi (x_{i,N-2})\pi (b_{N-1}^{-1})w \\
&=&\pi (b_{N-1})\pi (x_{i,N-2})v=\pi (b_{N-1})v=w
\end{eqnarray*}%
because $\pi (b_{N-1}^{-1})w=v$ then for all $i=1,\dots ,N-2$. Finally, let
us show that the vector $w$ is invariant with respect to $\pi (x_{i,N-2})$, $%
i=1,\dots ,N-2$. The equality
\begin{equation*}
\pi (b_{N-1})\pi (x_{i,N-2})\pi (b_{N-1}^{-1})v=v
\end{equation*}%
implies that $\pi (b_{N-1})\pi (x_{i,N-2})w=v$, and
\begin{equation*}
\pi (x_{i,N-2})w=\pi (b_{N-1}^{-1})v=w.
\end{equation*}%
We see that $w$ is invariant with respect to all operators $\pi (x_{i,j})$, $%
1\leq i\leq j\leq N-1$. Elements $x_{i,j}$ generate the pure braid group,
which implies that $w\in V^{\pi }$\textbf{. }Therefore the space $V^{\pi }$
is $\pi (b_{N-1})$-invariant under the operator \newline
By similar arguments one can show that $V^{\pi }$\textbf{\ }is invariant
w.r.t. the action of operators $\pi (b_{k})$\textbf{, }$k=1,\dots N-2$%
\textbf{. }

\textsl{(ii)} follows from the equality $\pi (b_{i}^{2})\upharpoonright ${$%
V^{\pi }$}$=\mathrm{id}_{V^{\pi }}$ and \textsl{(i)}.
\end{proof}

\bigskip

Consider the projection%
\begin{equation*}
p_{\pi }:V\rightarrow V^{\pi }.
\end{equation*}%
We have the following result.

\begin{theorem}
\label{projbraid} The projection $P_{U}:H\otimes V\mapsto \left( H\otimes
V\right) _{U}$ can be represented in the form
\begin{equation}
P_{U}=\frac{1\otimes p_{\pi }}{N!}\sum_{\sigma \in S_{N}}\mathfrak{S}(\sigma
)\otimes \tilde{\pi}(\sigma ),  \label{proj_formulae}
\end{equation}%
where $\tilde{\pi}$\ is the unitary representation of the group $S_{N}$
constructed in Lemma \ref{invarspace}.
\end{theorem}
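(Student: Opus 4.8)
The plan is to show that $U$-invariance of a vector $f\in H\otimes V$ splits into two independent conditions—invariance under the pure braid group, which confines $f$ to $H\otimes V^{\pi}$, and invariance under the residual $S_N$-action on that subspace—and then to recognize the right-hand side of \eqref{proj_formulae} as the composition of the two corresponding orthogonal projections.

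First I would note that, since $b_1,\dots,b_{N-1}$ generate $B_N$ and each $U(b_i)$ is unitary, $f$ is $U$-invariant if and only if $U(b)f=f$ for every $b\in B_N$. Restricting to a pure braid $\beta\in B_N^{pure}=\mathrm{Ker}\,\phi$ gives $\mathfrak{S}(\phi(\beta))=\mathfrak{S}(e)=1$, so by \eqref{reprBN} one has $U(\beta)=1\otimes\pi(\beta)$. Hence every $U$-invariant $f$ satisfies $(1\otimes\pi(\beta))f=f$ for all $\beta\in B_N^{pure}$, i.e. $f$ lies in the joint fixed space of $\{1\otimes\pi(\beta)\}$, which I would identify with $H\otimes V^{\pi}$, with orthogonal projection $1\otimes p_{\pi}$. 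The inclusion $H\otimes V^{\pi}\subseteq(H\otimes V)^{1\otimes\pi(B_N^{pure})}$ is immediate because $\pi(\beta)$ fixes $V^{\pi}$ pointwise; for the reverse inclusion I would split $V=V^{\pi}\oplus(V^{\pi})^{\perp}$ into $\pi$-invariant summands and use that $\pi(B_N^{pure})$ has no nonzero invariant vector in $(V^{\pi})^{\perp}$, so the closure of $\pi(B_N^{pure})$ in the compact unitary group of $V$ averages, via its Haar measure, to the zero operator on $(V^{\pi})^{\perp}$, forcing the $(V^{\pi})^{\perp}$-component of $f$ to vanish.

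Next, on $H\otimes V^{\pi}$ the action of $U$ descends through $\phi$ to the symmetric group: by Lemma \ref{invarspace} the subspace $V^{\pi}$ is $\pi(b_i)$-invariant and $\tilde\pi(\phi(b_i))=\pi(b_i)\!\upharpoonright\! V^{\pi}$, so \eqref{reprBN} gives $U(b_i)\!\upharpoonright\!(H\otimes V^{\pi})=(\mathfrak{S}\otimes\tilde\pi)(\phi(b_i))$, a unitary representation of $S_N$. Combining this with the previous step yields the identification
\begin{equation*}
(H\otimes V)_{U}=\{\,f\in H\otimes V^{\pi}\ :\ (\mathfrak{S}\otimes\tilde\pi)(\sigma)f=f\ \text{for all}\ \sigma\in S_N\,\},
\end{equation*}
since $U$-invariance on $H\otimes V^{\pi}$ is generated by the $U(b_i)$ and the $\phi(b_i)$ generate $S_N$. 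The orthogonal projection onto the space of invariants of a unitary representation of the finite group $S_N$ is the familiar average $\frac{1}{N!}\sum_{\sigma}(\mathfrak{S}\otimes\tilde\pi)(\sigma)$; precomposing it with the projection $1\otimes p_{\pi}$ onto $H\otimes V^{\pi}$ produces exactly \eqref{proj_formulae}. To finish I would verify directly that the right-hand side is self-adjoint and idempotent: self-adjointness follows from $\mathfrak{S}(\sigma)^{*}=\mathfrak{S}(\sigma^{-1})$, $\tilde\pi(\sigma)^{*}=\tilde\pi(\sigma^{-1})$ and reindexing $\sigma\mapsto\sigma^{-1}$, while idempotency is the standard computation $\frac{1}{(N!)^{2}}\sum_{\sigma,\tau}(\mathfrak{S}\otimes\tilde\pi)(\sigma\tau)=\frac{1}{N!}\sum_{\rho}(\mathfrak{S}\otimes\tilde\pi)(\rho)$, using that $1\otimes p_{\pi}$ commutes with each $\mathfrak{S}(\sigma)\otimes\tilde\pi(\sigma)$—a consequence of the $\pi(b_i)$-invariance of $V^{\pi}$ from Lemma \ref{invarspace}(i).

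I expect the main obstacle to be the reverse inclusion in the second paragraph, namely confirming that pure-braid invariance alone already forces $f\in H\otimes V^{\pi}$. The difficulty is that $B_N^{pure}$ is infinite, so one cannot average over it directly; the resolution is that $\pi(B_N^{pure})$ is contained in the compact unitary group of $V$, whose closure admits Haar averaging, identifying $1\otimes p_{\pi}$ with the fixed-space projection. The remaining bookkeeping—that $p_{\pi}$ commutes with the operators $\tilde\pi(\sigma)$, so that the product in \eqref{proj_formulae} is a genuine orthogonal projection—is routine once Lemma \ref{invarspace} is in hand.
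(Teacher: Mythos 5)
Your proof is correct and takes essentially the same route as the paper's: both identify $\left( H\otimes V\right) _{U}$ with the $S_{N}$-invariant part (under $\mathfrak{S}\otimes \tilde{\pi}$) of $H\otimes V^{\pi }$, using Lemma \ref{invarspace} both to make sense of $\tilde{\pi}$ and to see that $1\otimes p_{\pi }$ commutes with the group average, so that the right-hand side of (\ref{proj_formulae}) is the orthogonal projection onto that space. The only real difference is that you justify the step the paper treats as obvious --- that invariance under the pure braids, for which $U(\beta )=1\otimes \pi (\beta )$, already forces $f\in H\otimes V^{\pi }$ --- by Haar averaging over the closure of $\pi (B_{N}^{pure})$ in the unitary group of $V$; this is valid, though it also follows more elementarily by expanding $f$ along an orthonormal basis of $H$ and noting that each $V$-coefficient must then be fixed by every $\pi (\beta )$, $\beta \in B_{N}^{pure}$.
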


\begin{proof}
Let us denote by $A$\ the right-hand side of (\ref{proj_formulae}).
\textbf{\ I}t follows from Lemma %
\ref{invarspace} that the relation $\pi (b_{i})p_{\pi }=p_{\pi }\pi (b_{i})$%
\ holds for any $i=1,\dots ,N-1$. Therefore $A\in \mathrm{End\ }(H\otimes V)$%
\ is a projection.

It is sufficient
to show that images of $P$ and $A$ coincide.
It is obvious that if $\xi \in \left( H\otimes V\right)
_{U}$\ then $(1\otimes p_{\pi })\xi =\xi $\ and $\mathfrak{S}_{\sigma
}\otimes \tilde{\pi}(\sigma )\xi =\xi $. Therefore the space $\left(
H\otimes V\right) _{U}=P(H\otimes V)$\ is contained in the image of $A$.
Let us prove the converse inclusion. Let an element $\xi \in H\otimes V$\
belong to the image of $A$. Then $\xi \in H\otimes V^{\pi }$\ and $\mathfrak{%
S(\sigma )}\otimes \tilde{\pi}(\sigma )\xi =\xi $. Since $\pi
(b_{i})\upharpoonright V^{\pi }=\tilde{\pi}(\varphi (b_{i}))$\ we have
\begin{equation*}
U(b_{i})\xi =\mathfrak{S(\varphi (b_{i}))}\otimes \pi (b_{i})\xi =\mathfrak{S%
}_{\varphi (b_{i})}\otimes \tilde{\pi}(\varphi (b_{i}))\xi =\xi ,
\end{equation*}%
i. e. $\xi $\ is $U$-invariant. Thus the image of $A$\ is contained in the
space $\left( H\otimes V\right) _{U}=P_{U}(H\otimes V)$.
\end{proof}

\begin{corollary}
We have the inclusion\textbf{\ }$\left( H\otimes V\right) _{U}\subset
H\otimes V^{\pi }$\textbf{.}
\end{corollary}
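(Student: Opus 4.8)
The plan is to read the inclusion off directly from the explicit form of the projection $P_U$ established in Theorem \ref{projbraid}. The essential structural feature of formula (\ref{proj_formulae}) is that $P_U$ carries the factor $1\otimes p_\pi$ as a left multiplier. Since $(H\otimes V)_U$ is by definition the range of $P_U$, every vector in it must automatically lie in the range of $1\otimes p_\pi$, and it is this range that I will identify with $H\otimes V^\pi$.

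Concretely, I would take an arbitrary element $f\in (H\otimes V)_U$. Because $P_U$ is the orthogonal projection onto $(H\otimes V)_U$, we have $P_U f=f$. Substituting formula (\ref{proj_formulae}) then yields
\begin{equation*}
f = P_U f = (1\otimes p_\pi)\,\frac{1}{N!}\sum_{\sigma\in S_N}\bigl(\mathfrak{S}(\sigma)\otimes\tilde{\pi}(\sigma)\bigr)f,
\end{equation*}
which exhibits $f$ as an element of the range of $1\otimes p_\pi$.

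It then remains only to identify this range. Since $p_\pi$ is the orthogonal projection of $V$ onto $V^\pi$, the range of $1\otimes p_\pi$ acting on $H\otimes V$ is precisely $H\otimes V^\pi$. Hence $f\in H\otimes V^\pi$, and as $f$ was arbitrary the asserted inclusion $(H\otimes V)_U\subset H\otimes V^\pi$ follows. I do not anticipate any real obstacle here: the statement is an immediate consequence of the shape of (\ref{proj_formulae}), and the only point requiring a moment's care is the elementary identification of $\mathrm{Ran}(1\otimes p_\pi)$ with $H\otimes V^\pi$, which is in turn immediate from $\mathrm{Ran}(p_\pi)=V^\pi$.
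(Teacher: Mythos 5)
Your proof is correct and follows exactly the route the paper intends: the corollary is stated as an immediate consequence of Theorem \ref{projbraid}, since the factor $1\otimes p_{\pi}$ in formula (\ref{proj_formulae}) forces the range of $P_U$ into $\mathrm{Ran}(1\otimes p_{\pi})=H\otimes V^{\pi}$. The identification $\mathrm{Ran}(1\otimes p_{\pi})=H\otimes V^{\pi}$ that you single out is indeed the only point needing care, and you handle it correctly.
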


Let us introduce the von Neumann algebra $Q_{U}:=\{Q,\{U(\beta )\}_{\beta
\in B_{N}}\}^{\prime \prime }$ generated by $Q$ and the operators $U(\beta
),\beta \in B_{N}$. It is obvious that $P_{U}\in Q_{U}$\ .

\begin{lemma}
\label{braidfactor}Von Neumann algebras $(M\rtimes S_{N})\otimes \mathrm{End}%
(V)\ $and $Q_{U}$ are isomorphic.
\end{lemma}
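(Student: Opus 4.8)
The plan is to eliminate the braid operators $U(\beta)$ in favour of the simpler permutation operators $\mathfrak{S}(\sigma)\otimes 1$, thereby reducing the generating set of $Q_{U}$ to one to which Lemma \ref{isomorphism} applies after tensoring with $\mathrm{End}(V)$.

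First I would record that, since $\phi$ is a homomorphism and both $\mathfrak{S}\circ\phi$ and $\pi$ are representations of $B_{N}$, the defining formula (\ref{reprBN}) extends multiplicatively to $U(\beta)=\mathfrak{S}(\phi(\beta))\otimes\pi(\beta)$ for every $\beta\in B_{N}$. The key observation is that $1\otimes\pi(\beta)$ already lies in $Q=M\otimes\mathrm{End}(V)$, because $\mathrm{End}(V)\subset Q$; hence $(1\otimes\pi(\beta))^{-1}\in Q$, and multiplying yields $U(\beta)(1\otimes\pi(\beta))^{-1}=\mathfrak{S}(\phi(\beta))\otimes 1\in Q_{U}$. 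As $\phi$ is surjective onto $S_{N}$, this shows $\mathfrak{S}(\sigma)\otimes 1\in Q_{U}$ for all $\sigma\in S_{N}$; conversely, every $U(\beta)$ is the product of such an operator with the element $1\otimes\pi(\beta)$ of $Q$. Therefore $Q_{U}=\{M\otimes\mathrm{End}(V),\,\mathfrak{S}(\sigma)\otimes 1:\sigma\in S_{N}\}''$.

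Next I would identify this algebra as a tensor product. Writing $N:=\{M,\{\mathfrak{S}(\sigma)\}_{\sigma\in S_{N}}\}''$, the inclusion $Q_{U}\subseteq N\otimes\mathrm{End}(V)$ is immediate, since $M\otimes\mathrm{End}(V)\subseteq N\otimes\mathrm{End}(V)$ and $\mathfrak{S}(\sigma)\otimes 1\in N\otimes 1$. For the reverse inclusion, I note that $Q_{U}$ contains $M\otimes 1$ together with all $\mathfrak{S}(\sigma)\otimes 1$, hence $N\otimes 1$, and it also contains $1\otimes\mathrm{End}(V)$; since these two commuting subalgebras generate $N\otimes\mathrm{End}(V)$ — here the finite-dimensionality of $V$ guarantees that the spatial and von Neumann tensor products coincide — we obtain $Q_{U}=N\otimes\mathrm{End}(V)$.

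Finally, Lemma \ref{isomorphism} furnishes an isomorphism $N\cong M\rtimes_{\alpha}S_{N}$, and tensoring with the identity on $\mathrm{End}(V)$ gives the desired isomorphism $Q_{U}\cong(M\rtimes_{\alpha}S_{N})\otimes\mathrm{End}(V)$. I expect the only genuinely delicate point to be the tensor-factorization step $\{M\otimes\mathrm{End}(V),\mathfrak{S}(\sigma)\otimes 1\}''=N\otimes\mathrm{End}(V)$: one must verify that adjoining the operators $\mathfrak{S}(\sigma)\otimes 1$, which act nontrivially only on the first leg, produces nothing beyond $N$ on that leg, for which the commutation of the two tensor legs and the type-$I$ (matrix) structure of $\mathrm{End}(V)$ are precisely what is needed.
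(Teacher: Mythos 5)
Your proof is correct, but it is organized along a genuinely different route than the paper's. The paper works with a crossed product of $Q$ itself: it observes that $U(\beta)\notin Q$ for $\beta\notin B_{N}^{pure}$, asserts that $U$ therefore induces a nontrivial outer action $\alpha'$ of $S_{N}$ on the factor $Q$, identifies $Q_{U}\simeq Q\rtimes_{\alpha'}S_{N}$ by rerunning the argument of Lemma \ref{isomorphism} with $Q$ in place of $M$, and only at the last step absorbs the operators $\pi(\beta)\in\mathrm{End}(V)$ to conclude $Q\rtimes_{\alpha'}S_{N}\simeq(M\rtimes_{\alpha}S_{N})\otimes\mathrm{End}(V)$. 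You perform the absorption first: from $1\otimes\pi(\beta)\in Q$ you replace the generators $U(\beta)$ by the operators $\mathfrak{S}(\sigma)\otimes 1$, then factorize spatially, $Q_{U}=\{M,\{\mathfrak{S}(\sigma)\}_{\sigma}\}''\otimes\mathrm{End}(V)$, and apply Lemma \ref{isomorphism} exactly as stated, only to the first tensor leg. Your order of operations buys two things: you never have to define the $S_{N}$-action $\alpha'$ on $Q$ coming from the braid representation --- which is well defined only up to inner automorphisms, since conjugation by $U(\beta)$ depends on the chosen lift $\beta$ of $\sigma\in S_{N}$, a point the paper passes over silently --- and you use Lemma \ref{isomorphism} as a black box rather than repeating its argument for a second algebra. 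The price is the tensor-factorization step $\{M\otimes 1,\ \mathfrak{S}(\sigma)\otimes 1\}''=\{M,\mathfrak{S}(\sigma)\}''\otimes\mathbb{C}1$, which, as you note, is elementary here: since $\dim V<\infty$ one can verify it by writing operators on $H\otimes V$ as matrices over $B(H)$ and invoking the commutation theorem for tensor products. Both routes deliver the same isomorphism, and both feed equally well into the trace identity $\mathrm{Tr}_{Q_{U}}=\mathrm{Tr}_{M\rtimes S_{N}}\times\mathrm{tr}_{\mathrm{End}(V)}$ used in the subsequent corollaries.
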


\begin{proof}
Recall that operators $\mathfrak{S}(\phi (\beta ))$, $\beta \in B_{N}$, $%
\beta \neq e$, do not belong to $M$,\ which implies that operators\textbf{\ }%
\begin{equation*}
U(\beta )=\mathfrak{S}(\phi (\beta ))\otimes \pi (\beta ),\ \beta \in
B_{N},\ \beta \notin B^{pure}_N,
\end{equation*}%
do not belong to $Q$.\textbf{\ }Therefore the representation $U$ generates a
nontrivial outer action $\alpha ^{\prime }$ of the group $S_{N}$ on the $%
\mathrm{II}_{\infty }$ factor $Q$. Applying the arguments similar to the
proof of Lemma \ref{isomorphism} one can see that there exists an
isomorphism
\begin{equation}
(Q\rtimes _{\alpha ^{\prime }}S_{N})\simeq \{Q,\{U(\beta )\}_{\beta \in
B_{N}}\}^{\prime \prime }.  \label{semiQ}
\end{equation}%
On the other hand, for any $\beta \in B_{N}$ we have the inclusion\textbf{\ }%
$\pi (\beta )\in \mathrm{End}(V)$\textbf{, }which implies that the factors $%
Q\rtimes _{\alpha ^{\prime }}S_{N}$\ and $(M\rtimes _{\alpha }S_{N})\otimes
\mathrm{End}(V)$\ are isomorphic, and the result follows.
\end{proof}

\begin{corollary}
\label{trace1 copy(1)} $Q_{U}$ is a $\mathrm{II}_{\infty }$-factor.
\end{corollary}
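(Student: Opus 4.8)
The plan is to read this off directly from the structural isomorphism just established. By Lemma \ref{braidfactor} the algebra $Q_{U}$ is isomorphic to $(M\rtimes _{\alpha }S_{N})\otimes \mathrm{End}(V)$, so it suffices to show that the latter is a $\mathrm{II}_{\infty }$-factor. Two ingredients are needed: factoriality and type.

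First I would recall that factoriality of the crossed product has already been obtained in the discussion following Condition \ref{outer}: since $\alpha $ is an outer (hence free) action of $S_{N}$ on the factor $M$, one has $M^{\prime }\cap (M\rtimes _{\alpha }S_{N})=\mathbb{C}$, so $M\rtimes _{\alpha }S_{N}$ is a factor, and being the crossed product of the $\mathrm{II}_{\infty }$-factor $M$ by the finite group $S_{N}$ it is again of type $\mathrm{II}_{\infty }$. Next, as $V$ is finite-dimensional, $\mathrm{End}(V)\cong M_{n}(\mathbb{C})$ with $n=\dim V$ is a type $\mathrm{I}_{n}$ factor, and the tensor product $(M\rtimes _{\alpha }S_{N})\otimes \mathrm{End}(V)$ may be identified with the amplification $M_{n}(M\rtimes _{\alpha }S_{N})$.

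To conclude I would invoke the commutation theorem for tensor products of von Neumann algebras, which gives $(A\otimes B)^{\prime }=A^{\prime }\otimes B^{\prime }$ and hence, for the centers, $Z(A\otimes B)=Z(A)\otimes Z(B)$. Taking $A=M\rtimes _{\alpha }S_{N}$ and $B=\mathrm{End}(V)$, both of which are factors, yields $Z(Q_{U})=\mathbb{C}$, so $Q_{U}$ is a factor; the type is fixed by the product rule $\mathrm{II}_{\infty }\otimes \mathrm{I}_{n}=\mathrm{II}_{\infty }$, equivalently by the fact that amplification by a finite matrix algebra preserves the $\mathrm{II}_{\infty }$ type (and rescales the trace by a factor of $n$). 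I do not anticipate any genuine obstacle here: the only step requiring care is the identification of the center, and this rests entirely on the standard commutation theorem, so the corollary follows immediately from Lemma \ref{braidfactor}.
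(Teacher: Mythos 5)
Your proof is correct and follows the same route as the paper: the paper's own proof is simply the observation that, by Lemma \ref{braidfactor}, $Q_{U}\simeq (M\rtimes_{\alpha} S_{N})\otimes \mathrm{End}(V)$, and the latter is a $\mathrm{II}_{\infty}$-factor. You merely fill in the details the paper leaves implicit (factoriality of the crossed product via the outer action, and preservation of the $\mathrm{II}_{\infty}$ type under tensoring with the finite-dimensional factor $\mathrm{End}(V)$), all of which are standard and correctly argued.
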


\begin{proof}
Follows from the fact that$(M\rtimes S_{N})\otimes \mathrm{End}(V)$ is a $%
\mathrm{II}_{\infty }$-factor.
\end{proof}

\begin{corollary}
It follows from (\ref{braidfactor}) that $\mathrm{Tr}_{Q_{U}}=\mathrm{Tr}%
_{M\rtimes S_{N}}\times \mathrm{tr}_{\mathrm{End}(V)}$.
\end{corollary}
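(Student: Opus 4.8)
The plan is to transport the canonical trace of $Q_{U}$ across the isomorphism of Lemma~\ref{braidfactor} and then appeal to the uniqueness of the semifinite trace on a $\mathrm{II}_{\infty}$ factor. First I would fix the isomorphism $\Psi\colon Q_{U}\to (M\rtimes _{\alpha }S_{N})\otimes \mathrm{End}(V)$ supplied by that lemma. On the target, the first tensor factor $M\rtimes _{\alpha }S_{N}$ is a $\mathrm{II}_{\infty }$ factor carrying its faithful normal semifinite trace $\mathrm{Tr}_{M\rtimes S_{N}}$, while the second factor $\mathrm{End}(V)$ is a type $\mathrm{I}_{\dim V}$ factor carrying the canonical trace $\mathrm{tr}_{\mathrm{End}(V)}$. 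The product $\mathrm{Tr}_{M\rtimes S_{N}}\times \mathrm{tr}_{\mathrm{End}(V)}$ is then a faithful normal semifinite trace on the tensor product; this is the standard fact that a tensor product of normal semifinite traces defines such a trace on the tensor product von Neumann algebra, its semifiniteness being immediate from the finite dimensionality of $\mathrm{End}(V)$.

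Next I would use that $(M\rtimes _{\alpha }S_{N})\otimes \mathrm{End}(V)$, being the tensor product of a $\mathrm{II}_{\infty }$ factor with a finite type $\mathrm{I}$ factor, is again a $\mathrm{II}_{\infty }$ factor (already recorded in Corollary~\ref{trace1 copy(1)}). On a $\mathrm{II}_{\infty }$ factor the faithful normal semifinite trace is unique up to a positive scalar multiple. Pulling $\mathrm{Tr}_{Q_{U}}$ back through $\Psi$ produces a second such trace on $(M\rtimes _{\alpha }S_{N})\otimes \mathrm{End}(V)$, so it must agree with $\mathrm{Tr}_{M\rtimes S_{N}}\times \mathrm{tr}_{\mathrm{End}(V)}$ up to a positive constant $c$.

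Finally I would pin down $c=1$ by evaluating both traces on one elementary projection. Taking a finite projection $p\in M\rtimes _{\alpha }S_{N}$ together with a minimal projection $e\in \mathrm{End}(V)$, the element $p\otimes e$ is a finite projection on which each trace is computed directly from its definition, which forces $c=1$ provided the normalizations of $\mathrm{Tr}_{M\rtimes S_{N}}$ and $\mathrm{tr}_{\mathrm{End}(V)}$ are chosen to match those induced on $Q_{U}$. This normalization check is the only delicate point: everything else is the routine uniqueness argument for traces on $\mathrm{II}_{\infty }$ factors, so it is precisely the matching of the two normalizations where I would expect the genuine, if minor, work to lie.
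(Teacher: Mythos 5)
Your proposal is correct and follows essentially the same route as the paper, which offers no argument beyond invoking the isomorphism of Lemma~\ref{braidfactor}: transporting the product trace through that isomorphism and appealing to the uniqueness (up to a positive scalar) of the faithful normal semifinite trace on a $\mathrm{II}_{\infty}$ factor is exactly the intended justification. The one clarification worth making is that your final step of pinning down the constant $c=1$ is not a genuine obstacle but a convention: since the paper defines $\mathrm{Tr}_{K}$ only as \emph{the} faithful normal semifinite trace on a $\mathrm{II}_{\infty}$ factor $K$, which exists only up to a positive scalar, there is no prior independent normalization of $\mathrm{Tr}_{Q_{U}}$ against which $c$ could be measured, and the corollary is best read as \emph{fixing} the normalization of $\mathrm{Tr}_{Q_{U}}$ to be the one induced by $\mathrm{Tr}_{M\rtimes S_{N}}\times \mathrm{tr}_{\mathrm{End}(V)}$ (with $\mathrm{tr}_{\mathrm{End}(V)}(1)=1$, as the paper stipulates immediately afterwards).
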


Without loss of generality we assume that the trace on $\mathrm{End}(V)$ is
normalized, that is, $\mathrm{tr}_{\mathrm{End}(V)}(1)=1$.

\begin{corollary}
\label{trace11} For any $A\in M$\textrm{\ }and $B\in \mathrm{End}\ (V)$
commuting with $p_{\pi }$ we have
\begin{equation}
\mathrm{Tr}_{Q_{U}}((A\otimes B)P_{U})=\frac{\mathrm{Tr}_{M}(A)}{N!}\mathrm{%
tr}_{\mathrm{End}(V)}(Bp_{\pi }).  \label{tr_braid0}
\end{equation}
\end{corollary}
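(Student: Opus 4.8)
The plan is to feed the explicit form of $P_U$ from Theorem \ref{projbraid} into the trace and to exploit the factorization $\mathrm{Tr}_{Q_U}=\mathrm{Tr}_{M\rtimes S_N}\times\mathrm{tr}_{\mathrm{End}(V)}$ supplied by Lemma \ref{braidfactor} and the corollary preceding the statement. First I would expand, using $(A\otimes B)(1\otimes p_\pi)=A\otimes Bp_\pi$,
\begin{equation*}
(A\otimes B)P_U=\frac{1}{N!}\sum_{\sigma\in S_N}\bigl(A\,\mathfrak{S}(\sigma)\bigr)\otimes\bigl(Bp_\pi\,\tilde\pi(\sigma)\bigr).
\end{equation*}
The structural point I must pin down is that, under the identification of Lemma \ref{braidfactor}, $Q_U$ is realized as the honest spatial tensor product $\{M,\{\mathfrak{S}(\sigma)\}_{\sigma\in S_N}\}''\otimes\mathrm{End}(V)=(M\rtimes_\alpha S_N)\otimes\mathrm{End}(V)$ sitting inside $\mathrm{End}(H\otimes V)$; this holds because $1\otimes\pi(\beta)\in Q$ forces $\mathfrak{S}(\phi(\beta))\otimes 1=U(\beta)(1\otimes\pi(\beta))^{-1}\in Q_U$, and conversely every $U(\beta)$ lies in that tensor product. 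With this identification each summand is a genuine elementary tensor, so the product trace acts factorwise.

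The crux is the evaluation of the first tensor factor. Here I would invoke the standard description of the canonical trace on a crossed product of a factor by an outer action of a finite group: the conditional expectation $E\colon M\rtimes_\alpha S_N\to M$ kills every group element $\mathfrak{S}(\sigma)$ with $\sigma\neq e$, and $\mathrm{Tr}_{M\rtimes S_N}=\mathrm{Tr}_M\circ E$ restricts to $\mathrm{Tr}_M$ on $M$. This is precisely the place where the outerness furnished by Condition \ref{outer} (through Lemma \ref{isomorphism}) enters, and it yields $\mathrm{Tr}_{M\rtimes S_N}(A\,\mathfrak{S}(\sigma))=\delta_{\sigma,e}\,\mathrm{Tr}_M(A)$. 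Consequently all terms with $\sigma\neq e$ vanish and only the contribution of $\sigma=e$ survives.

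It then remains to read off that single term. Since $\tilde\pi$ is the representation of $S_N$ on $V^{\pi}$ from Lemma \ref{invarspace}, one has $\tilde\pi(e)=\mathrm{id}_{V^{\pi}}=p_\pi$ as an operator on $V$, so that $Bp_\pi\tilde\pi(e)=Bp_\pi$; the hypothesis that $B$ commutes with $p_\pi$ guarantees that this equals the compression $p_\pi Bp_\pi$ of $B$ to $V^{\pi}$, which is the intended meaning of the right-hand side. Carrying along the factor $\tfrac{1}{N!}$ inherited from $P_U$ then gives
\begin{equation*}
\mathrm{Tr}_{Q_U}\bigl((A\otimes B)P_U\bigr)=\frac{\mathrm{Tr}_M(A)}{N!}\,\mathrm{tr}_{\mathrm{End}(V)}(Bp_\pi),
\end{equation*}
as asserted. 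I expect the only genuinely delicate ingredient to be the vanishing of $\mathrm{Tr}_{M\rtimes S_N}$ on the nontrivial group elements $\mathfrak{S}(\sigma)$ together with keeping the normalization conventions of the crossed-product trace straight; the rest is bookkeeping of the two tensor legs.
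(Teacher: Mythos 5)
Your proposal is correct and follows essentially the same route as the paper: expand $(A\otimes B)P_U$ via the explicit formula (\ref{proj_formulae}), apply the product trace $\mathrm{Tr}_{M\rtimes S_N}\times\mathrm{tr}_{\mathrm{End}(V)}$ coming from Lemma \ref{braidfactor}, and use the fact that the canonical trace on the crossed product by an outer action vanishes on $A\,\mathfrak{S}(\sigma)$ for $\sigma\neq e$, leaving only the identity term $\frac{1}{N!}\mathrm{Tr}_M(A)\,\mathrm{tr}_{\mathrm{End}(V)}(Bp_\pi)$. Your explicit justification of the spatial identification $Q_U\simeq(M\rtimes_\alpha S_N)\otimes\mathrm{End}(V)$ and of the conditional-expectation description of the crossed-product trace simply spells out details the paper leaves implicit.
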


\begin{proof}
It follows the properties of the trace on a semidirect product of factors
that for any $\sigma \in S_{N}$ the following equality holds:
\begin{eqnarray*}
\mathrm{Tr}_{Q_{U}}(A\mathfrak{S}(\sigma )\otimes Bp_{\pi }\tilde{\pi}%
(\sigma )) &=&\mathrm{Tr}_{M\rtimes S_{N}}(A\mathfrak{S}(\sigma ))\mathrm{tr}%
_{\mathrm{End}(V)}(Bp_{\pi }\tilde{\pi}(\sigma \mathbf{))} \\
&=&\delta _{e,\sigma }\mathrm{Tr}_{M}(A)\mathrm{tr}_{\mathrm{End}%
(V)}(Bp_{\pi }\tilde{\pi}(\sigma \mathbf{))},
\end{eqnarray*}%
where $\delta _{e,\sigma }$ is the Kronecker symbol. Formula (\ref{tr_braid0}%
) follows now from (\ref{proj_formulae}).
\end{proof}

\begin{remark}
Introduce the projection%
\begin{equation*}
P_{s}=\frac{1}{N!}\sum_{\sigma \in S_{N}}\mathfrak{S}_{\sigma }.
\end{equation*}%
It follows from (\ref{tr_braid0}) that the following formula holds:
\begin{eqnarray*}
\mathrm{Tr}_{Q_{U}}((A\otimes B)P_{U})=\mathrm{Tr}_{M}(P_{s})\mathrm{Tr}%
_{Q_{U}}((A\otimes B)(1\otimes p_{\pi })) && \\
=\frac{1}{N!}\mathrm{Tr}_{Q_{U}}((A\otimes B)(1\otimes p_{\pi })).\ &&
\end{eqnarray*}
\end{remark}

In order to extend formula (\ref{tr_braid0}) to general elements of $Q_{U}$,
we introduce $\mathrm{II}_{\infty }$-factors
\begin{equation*}
Q^{1}:=M\otimes \mathrm{End}\ (V^{\pi })\text{ and }Q^{2}:=M\otimes \mathrm{%
End}\ ((V^{\pi })^{\perp }).
\end{equation*}%
It is well known that any finite dimensional unitary representation of a
locally compact group is completely reducible. Thus the representation $\pi $
can be decomposed into a direct sum $\pi =\pi ^{1}\oplus \pi ^{2}$ where $%
\pi ^{1}=\tilde{\pi}=\pi \upharpoonright V^{\pi }$ and $\pi ^{2}=\pi
\upharpoonright (V^{\pi })^{\perp }$. Observe that in the case when $\pi $
is irreducible we have $V^{\pi }=\{0\}$ and $\pi ^{1}$ is a zero
representation. Therefore we can introduce von Neumann algebras
\begin{equation*}
\tilde{Q}^{1}=\{Q^{1},\{\mathfrak{S}(\sigma )\otimes \tilde{\pi}(\sigma
)\}_{\sigma \in S_{N}}\}^{\prime \prime }\text{ and }\tilde{Q}^{2}=\{Q^{2},\{%
\mathfrak{S(\phi (\beta ))}\otimes \pi ^{2}(\beta )\}_{\beta \in
B_{N}}\}^{\prime \prime }.
\end{equation*}%
Clearly $\tilde{Q}^{1}$ and $\tilde{Q}^{2}$ are isomorphic to $\mathrm{II}%
_{\infty }$-factors $Q^{1}\rtimes S_{N}$ and $Q^{2}\rtimes S_{N}$
respectively, and $P_U\in \tilde{Q}^{1}$.

We define the following natural trace on the von Neumann algebra $\tilde{Q}%
^{1}\oplus \tilde{Q}^{2}$ setting
\begin{equation}
\mathrm{Tr}_{\tilde{Q}^{1}\oplus \tilde{Q}^{2}}(A_{1}\oplus A_{2})=\frac{%
\mathrm{dim}\ V^{\pi }}{\mathrm{dim}\ V}\mathrm{Tr}_{\tilde{Q}^{1}}(A_{1})+%
\frac{\mathrm{dim}\ (V^{\pi })^{\perp }}{\mathrm{dim}\ V}\mathrm{Tr}_{\tilde{%
Q}^{2}}(A_{2}),
\end{equation}%
$A_{1}\in \tilde{Q}^{1}$, $A_{2}\in \tilde{Q}^{2}$.

Consider an arbitrary $A\in Q$ commuting with $1\otimes
p_{\pi }$. Then $A\in Q^{1}\oplus Q^{2}$ and $AP_{U}\in \tilde{Q}^{1}$.
Observe that $\frac{\mathrm{dim}\ V^{\pi }}{\mathrm{dim}\ V}=\mathrm{tr}_{%
\mathrm{End}(V)}(p_{\pi })$ and $A(1\otimes p_{\pi })\in Q^{1}$. Applying
the arguments similar to the proof of Corollary \ref{trace11} one can see
that%
\begin{equation}
\mathrm{Tr}_{\tilde{Q}^{1}\oplus \tilde{Q}^{2}}(AP_{U})=\frac{\mathrm{Tr}%
_{Q^{1}}(A(1\otimes p_{\pi }))}{N!}\mathrm{tr}_{\mathrm{End}(V)}(p_{\pi }).
\label{tr_braid_reduce0}
\end{equation}%

\begin{remark}\label{trace_reduce}
Note that formula (\ref{tr_braid_reduce0}) is compatible with (\ref%
{tr_braid0}). Indeed, if $A\otimes B\in M\otimes \mathrm{End}\ (V)$ and $%
B\in \mathrm{End}\ (V)$ commutes with $p_{\pi }$, then $A\otimes B\in
Q^{1}\oplus Q^{2}$ and by (\ref{tr_braid_reduce0}) we have
\begin{eqnarray*}
\mathrm{Tr}_{\tilde{Q}^{1}\oplus \tilde{Q}^{2}}((A\otimes B)P_{U}) &=&\frac{%
\mathrm{Tr}_{Q^{1}}(A\otimes Bp_{\pi })}{N!}\mathrm{tr}_{\mathrm{End}%
(V)}(p_{\pi }) \\
&=&\frac{\mathrm{Tr}_{M}(A)}{N!}\mathrm{tr}_{\mathrm{End}(V^{\pi })}(Bp_{\pi
})\mathrm{tr}_{\mathrm{End}(V)}(p_{\pi }) \\
&=&\frac{\mathrm{Tr}_{M}(A)}{N!}\mathrm{tr}_{\mathrm{End}(V)}(Bp_{\pi })
\end{eqnarray*}%
because the traces $\mathrm{tr}_{\mathrm{End}(V)}$ and $\mathrm{tr}_{\mathrm{%
End}(V^{\pi })}$ are normalized.
\end{remark}

\section{Braided multi-particle spaces and the corresponding trace}

\label{sec2}In this section we apply the general construction discussed
above to the case where the space $H$ has the form of the tensor product $%
H_{0}^{\otimes N}$ of $N$ copies of a separable Hilbert space $H_{0}$ and
\begin{equation}
S_{N}\ni \sigma \mapsto \mathfrak{S}(\sigma )\in \mathrm{End}(H_{0}^{\otimes
N})  \label{permut}
\end{equation}%
is the natural action of $S_{N}$ in $H_{0}^{\otimes N}$ by permutations, so
that $\mathfrak{S}(\phi (b_{i}))$ is the permutation of the $i$-th and $i+1$%
-th components in $H_{0}^{\otimes N}$. As in the previous section, the
representation $U$ is defined by formula (\ref{reprBN}), that is,
\begin{equation}
U(b_{i})(\xi \otimes v)=\mathfrak{S}(\phi (b_{i}))\xi \otimes \pi (b_{i})v,
\label{reprBNtensor}
\end{equation}%
where $\xi \in H,\ v\in V$, elements $b_{1},\dots ,b_{N-1}$ are the
generators of the group $B_{N}$ and $\phi :B_{N}\rightarrow S_{N}$ is the
canonical homomorphism. The corresponding subspace $\left( H\otimes V\right)
_{U}$ of invariant elements (cf. (\ref{inv-subs}))\textbf{\ }will be called
the \textit{braided} $N$\textit{-particle space. }

Let $M_{0}$ be a $\mathrm{II}_{\infty }$ factor acting in $H_{0}$ and define
the corresponding $\mathrm{II}_{\infty }$ factors $M=M_{0}^{\otimes N}$ and $%
Q=M\otimes \mathrm{End}\ V$ acting in $H=H_{0}^{\otimes N}$ and $H\otimes V$.

The following result was proved in \cite{DKal} (see also \cite{DKal1}).

\begin{proposition}
\label{condition1}Representation $\mathfrak{S}$ defined by formula (\ref%
{permut}) satisfies Condition \ref{outer}, that is, $\mathfrak{S}(\sigma
)\notin M$ for all $\sigma \in S_{N}$.
\end{proposition}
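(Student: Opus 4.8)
The goal is to show that for the permutation representation $\mathfrak{S}$ of $S_N$ on $H_0^{\otimes N}$, none of the operators $\mathfrak{S}(\sigma)$ with $\sigma \neq e$ lies in $M = M_0^{\otimes N}$, where $M_0$ is a $\mathrm{II}_\infty$ factor on $H_0$.

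The plan is to argue by commutant. Since $M = M_0^{\otimes N}$, its commutant is $M' = (M_0')^{\otimes N}$ (the commutant of a tensor product of factors is the tensor product of the commutants). So it suffices to produce, for each nonidentity permutation $\sigma$, an element $T \in M_0'^{\otimes N}$ that does not commute with $\mathfrak{S}(\sigma)$; this will certify $\mathfrak{S}(\sigma) \notin (M')' = M$.

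First I would reduce to the case of a transposition, or more efficiently work directly with the cycle structure of $\sigma$. Pick an index $i$ that $\sigma$ does not fix, say $\sigma(i) = j \neq i$. The idea is to choose a test operator of the simple factorized form $T = T_1 \otimes \cdots \otimes T_N$ with each $T_k \in M_0'$, where I place a distinguished operator in the $i$-th slot and the identity elsewhere; conjugating $T$ by $\mathfrak{S}(\sigma)$ permutes the slots, moving the distinguished operator from slot $i$ to slot $j$. Concretely, $\mathfrak{S}(\sigma) T \mathfrak{S}(\sigma)^* = T_{\sigma^{-1}(1)} \otimes \cdots \otimes T_{\sigma^{-1}(N)}$, so if $T$ has the distinguished operator $S \in M_0'$ in slot $i$ and identity elsewhere, the conjugate has $S$ in slot $j \neq i$. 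These two operators differ (as elements of $\mathrm{End}(H_0^{\otimes N})$) as soon as $S \neq c\cdot 1$ for a scalar $c$. Since $M_0$ is a $\mathrm{II}_\infty$ factor, $M_0'$ is a $\mathrm{II}_1$ factor (or at least an infinite-dimensional von Neumann algebra), hence contains non-scalar elements $S$, which guarantees the conjugate is genuinely different from $T$.

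The main obstacle — really the only point requiring care — is justifying the commutant identity $M' = (M_0')^{\otimes N}$ and the clean behavior of conjugation by $\mathfrak{S}(\sigma)$ on elementary tensors. The commutation formula for permutations acting on a tensor power, $\mathfrak{S}(\sigma)(T_1 \otimes \cdots \otimes T_N)\mathfrak{S}(\sigma)^* = T_{\sigma^{-1}(1)} \otimes \cdots \otimes T_{\sigma^{-1}(N)}$, is a routine verification on product vectors, and the commutation theorem for tensor products of von Neumann algebras gives $M' = (M_0')^{\otimes N}$ with $M_0' \neq \mathbb{C}$ since $M_0$ is not a type $\mathrm{I}_n$ factor. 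Having produced $S \in M_0'$ non-scalar and the corresponding $T \in M'$ with $\mathfrak{S}(\sigma) T \neq T \mathfrak{S}(\sigma)$, I conclude $\mathfrak{S}(\sigma) \notin M'' = M$, completing the proof. Since this is cited as already established in \cite{DKal}, I would present it compactly, emphasizing the commutant computation and the non-scalarity of $M_0'$ as the two ingredients that make it work.
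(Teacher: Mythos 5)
Your proof is correct, but there is nothing in the paper itself to compare it against: Proposition \ref{condition1} is stated without proof, the paper simply citing \cite{DKal} (see also \cite{DKal1}), so your argument stands as a self-contained substitute. Taken on its own merits it is sound. By the double commutant theorem, $\mathfrak{S}(\sigma)\in M=M''$ would force $\mathfrak{S}(\sigma)$ to commute with every element of $M'$, so it suffices to exhibit $T\in M'$ with $\mathfrak{S}(\sigma)T\mathfrak{S}(\sigma)^{*}\neq T$. Tomita's commutation theorem gives $M'=(M_{0}')^{\otimes N}$, and placing a non-scalar $S\in M_{0}'$ in a slot $i$ not fixed by $\sigma$ (identity elsewhere) yields, after conjugation, the elementary tensor with $S$ in slot $\sigma(i)\neq i$; this differs from $T$ exactly because $S$ is non-scalar (if the two tensors coincided, evaluating matrix coefficients on product vectors would force $\langle S\xi,\eta\rangle=c\langle\xi,\eta\rangle$ for all $\xi,\eta\in H_{0}$, i.e. $S=c\cdot 1$). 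Both supporting facts you rely on do hold: the commutation formula for conjugation of elementary tensors is a direct check on product vectors, and $M_{0}'\neq\mathbb{C}1$ because otherwise $M_{0}=M_{0}''=B(H_{0})$ would be a type $\mathrm{I}$ factor rather than $\mathrm{II}_{\infty}$. One small imprecision: the commutant of a $\mathrm{II}_{\infty}$ factor need not be $\mathrm{II}_{1}$ --- it is a type $\mathrm{II}$ factor whose type ($\mathrm{II}_{1}$ or $\mathrm{II}_{\infty}$) depends on the representation --- but only its non-scalarity is used, so this does not affect the argument.
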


Proposition \ref{condition1} implies that we can apply the theory developed
in the previous section. According to Lemma \ref{braidfactor} we have
\begin{equation}
P_{U}\in Q_{U}=(M_{0}^{\otimes N}\times _{\alpha }S_{N})\otimes \mathrm{End}%
(V).  \label{isom}
\end{equation}%
In what follows we will show that $Q_{U}$ is the minimal extension of $%
M_{0}^{\otimes N}$\textbf{\ }containing\textbf{\ }and $P_U$\textbf{.}

\begin{theorem}
\label{equal} Factors $Q_{U}$ and $Q_{P}:=\{Q,P_U\}^{\prime \prime }$ are
isomorphic.
\end{theorem}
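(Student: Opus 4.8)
The plan is to prove the stronger statement that $Q_P=Q_U$ as von Neumann algebras; the asserted isomorphism then follows trivially. One inclusion is free: since $P_U\in Q_U$ (as noted before the theorem) and $Q\subseteq Q_U$, we have $Q_P=\{Q,P_U\}''\subseteq Q_U$. The whole work is therefore to prove $Q_U\subseteq Q_P$, and because $Q_U=\{Q,\{U(\beta)\}_{\beta\in B_N}\}''$ with $Q\subseteq Q_P$ already, it suffices to show that each generator $U(b_i)$, $i=1,\dots,N-1$, lies in $Q_P$.

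First I would pass to the corner $1\otimes p_\pi$. Since $p_\pi\in\mathrm{End}(V)$ we have $1\otimes p_\pi\in Q\subseteq Q_P$, so the corner factor $Q^1=(1\otimes p_\pi)Q(1\otimes p_\pi)=M\otimes\mathrm{End}(V^\pi)$ lies in $Q_P$, as does $P_U$ (whose image is contained in $H\otimes V^\pi$). On $H\otimes V^\pi$ the operators $w_\sigma:=\mathfrak{S}(\sigma)\otimes\tilde\pi(\sigma)$ implement a genuine untwisted action of $S_N$ on $Q^1$; this action is outer since its restriction to $M$ is outer by Proposition \ref{condition1}, while the twist by $\tilde\pi(\sigma)\in\mathrm{End}(V^\pi)$ is inner, and outer $\otimes$ inner is outer on the tensor-product factor. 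By formula (\ref{proj_formulae}) the restriction of $P_U$ to this corner is exactly the averaging projection $\frac1{N!}\sum_{\sigma}w_\sigma$ of the action. Invoking the crossed-product-from-averaging-projection result for outer finite group actions on a factor (the $S_N$-case already established in \cite{DKal}, applied here to $Q^1$ in place of $M$), I obtain $\{Q^1,P_U\}''=Q^1\rtimes S_N=\tilde Q^1$. In particular each $w_\sigma$, viewed in $B(H\otimes V)$ as $\mathfrak{S}(\sigma)\otimes\hat\pi(\sigma)$ with $\hat\pi(\sigma):=\tilde\pi(\sigma)p_\pi$, belongs to $Q_P$.

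The second step uses the simplicity of the matrix algebra $\mathrm{End}(V)$ to spread this corner information over all of $V$. In the nondegenerate case $V^\pi\neq\{0\}$ (so that $P_U\neq0$; if $V^\pi=\{0\}$ then $P_U=0$ and the statement degenerates to $Q_P=Q$) the element $\hat\pi(\sigma)$ is a nonzero operator in the simple algebra $\mathrm{End}(V)$, so the two-sided ideal it generates is all of $\mathrm{End}(V)$ and one may write $1_V=\sum_k X_k\hat\pi(\sigma)Y_k$ for suitable $X_k,Y_k\in\mathrm{End}(V)$. Since $1\otimes X_k,\,1\otimes Y_k\in Q\subseteq Q_P$, I then get
\begin{equation*}
\mathfrak{S}(\sigma)\otimes 1=\sum_k(1\otimes X_k)\big(\mathfrak{S}(\sigma)\otimes\hat\pi(\sigma)\big)(1\otimes Y_k)\in Q_P
\end{equation*}
for every $\sigma\in S_N$. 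Consequently $U(b_i)=\big(\mathfrak{S}(\phi(b_i))\otimes 1\big)\big(1\otimes\pi(b_i)\big)\in Q_P$ for each $i$ by (\ref{reprBNtensor}), whence $Q_U\subseteq Q_P$ and finally $Q_P=Q_U$.

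The main obstacle is the first step: establishing that the averaging projection $P_U$ together with the corner $Q^1$ generates the full crossed product $\tilde Q^1$. This is the genuinely operator-algebraic part, being the analogue for the twisted outer $S_N$-action of the classical fact that the averaging projection of an outer finite group action is a generating Jones projection for the inclusion of the fixed-point algebra. Once this is in hand, the passage from the $V^\pi$-corner to all of $\mathrm{End}(V)$ is purely algebraic, resting only on the simplicity of $\mathrm{End}(V)$, and the recovery of the $U(b_i)$ is immediate.
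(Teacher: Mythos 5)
Your overall architecture is sound, and your second step (recovering $\mathfrak{S}(\sigma)\otimes 1$ from $\mathfrak{S}(\sigma)\otimes\tilde\pi(\sigma)p_{\pi}$ by writing $1_{V}=\sum_{k}X_{k}\,\tilde\pi(\sigma)p_{\pi}\,Y_{k}$ via simplicity of $\mathrm{End}(V)$) is correct. The gap is exactly where you locate it yourself: the claim $\{Q^{1},P_{U}\}^{\prime\prime}=\tilde{Q}^{1}$ cannot be obtained by citing \cite{DKal} ``applied to $Q^{1}$ in place of $M$.'' The result of \cite{DKal} concerns the \emph{untwisted} permutation action of $S_{N}$ on a tensor power $M_{0}^{\otimes N}$, and its proof (reproduced in the present paper as the shift--isometry argument) uses that specific tensor structure; the action of $S_{N}$ on $Q^{1}=M\otimes\mathrm{End}(V^{\pi})$ by $\mathrm{Ad}(\mathfrak{S}(\sigma)\otimes\tilde\pi(\sigma))$ is twisted, hence not an instance of it. The general principle you invoke --- for an arbitrary outer action of a finite group on a factor, the factor together with the averaging projection generates the crossed product --- is in fact true, but in a non-standard representation it is not a wave-of-the-hand classical fact: one needs either the Galois correspondence for outer finite-group actions (Nakamura--Takeda for type $\mathrm{II}_{1}$, Izumi--Longo--Popa in general) combined with the remark that the identification $\{Q^{1},\{\mathfrak{S}(\sigma)\otimes\tilde\pi(\sigma)\}\}^{\prime\prime}\simeq Q^{1}\rtimes S_{N}$ is a normal isomorphism (so that generation questions transfer), or a bimodule-disjointness argument. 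The cheapest repair, and the one closest to the paper, is to observe that the shift trick adapts verbatim to the twisted action: the isometries $(1\otimes\cdots\otimes v^{m_{j}}\otimes\cdots)\otimes 1_{V^{\pi}}$ lie in $M\otimes 1\subset Q^{1}$ and commute with the $\mathrm{End}(V^{\pi})$ leg, so conjugating $P_{U}$ by them (with suitably staggered exponents $m_{j}$, shifting all coordinates except $i,i+1$) kills, in the strong topology, every term of (\ref{proj_formulae}) except $\sigma\in\{e,(i,i+1)\}$; this places $\mathfrak{S}((i,i+1))\otimes\tilde\pi((i,i+1))p_{\pi}$ in $Q_{P}$, and transpositions generate $S_{N}$. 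You would need to carry out this short adaptation to close the argument.

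Comparing with the paper: the paper runs the shift argument directly on $H\otimes V$ and asserts that the strong limit of the conjugated projection is $\frac{2}{N!}P_{1}$, $P_{1}=\frac{1}{2}(1+\mathfrak{S}(\phi(b_{1}))\otimes 1_{\mathrm{End}(V)})$. That claimed limit is incorrect on the $V$ leg: the shifts act trivially on $V$, so the twist $\tilde\pi(\sigma)p_{\pi}$ cannot be removed by conjugation (and with equal exponents, as written, all $\sigma$ preserving $\{1,2\}$ survive as well); what one actually obtains is an element of the form $\frac{1}{N!}\sum\mathfrak{S}(\sigma)\otimes\tilde\pi(\sigma)p_{\pi}$. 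Passing from such elements to $\mathfrak{S}(\phi(b_{i}))\otimes 1_{\mathrm{End}(V)}$ requires precisely your simplicity-of-$\mathrm{End}(V)$ step, so your second step is not merely correct --- it supplies an untwisting that the paper's own proof tacitly needs. Finally, note that both you and the paper are inconclusive in the degenerate case $V^{\pi}=\{0\}$ (which does occur, cf.\ Example 4 of Section \ref{sec4}, where $\mathrm{dim}\,A_{N}^{\pi}=0$): there $P_{U}=0$, hence $Q_{P}=Q$, and there is no reason for $Q$ to be isomorphic to $Q_{U}\simeq(M\rtimes S_{N})\otimes\mathrm{End}(V)$; you at least flag this case explicitly, whereas the paper passes over it in silence.
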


\begin{proof}
The inclusion $Q_{P}\subset (M_{0}^{\otimes N}\times _{\alpha }S_{N})\otimes
\mathrm{End}(V)$ follows from (\ref{isom}).

Let us prove the inverse inclusion. Observe that operators $\pi (b_{i})\in
\mathrm{End}(V)$ and hence $1_{M_{0}^{\otimes N}\times _{\alpha
}S_{N}}\otimes \pi (b_{i})\in Q_{P}$. This it suffices to prove that the
operators $\mathfrak{S}(\phi (b_{i}))\otimes 1_{\mathrm{End}(V)}$ belong to $%
Q_{P}$. Denote $P_{i}=\frac{1}{2}(1+\mathfrak{S}(\phi (b_{i}))\otimes 1_{%
\mathrm{End}(V)})$. \textbf{\ }The $\mathrm{II}_{\infty }$-factor $M$\ can
be represented in a form $N\otimes B(l_{2})$\textbf{,} where $N$ is a $%
\mathrm{II}_{1}$ factor and $B(l_{2})$ is a space of all bounded operators
in the Hilbert space $l_{2}=l_{2}(\mathbb{N)}$. Therefore $M$ contains the
isometry $v=1_{N}\otimes T$, where $Te_{i}=e_{i+1}$ is the operator of
unilateral shift in $l_{2}$. It is clear that $(v^{\ast })^{m}\rightarrow 0$%
, $m\rightarrow \infty $, strongly. Therefore we have strong convergence
\begin{align}
& \left( (1\otimes 1\otimes (v^{\ast })^{m}\otimes \dots \otimes (v^{\ast
})^{m})\otimes 1_{\mathrm{End}(V)}\right) \cdot \\
& \cdot P\left( (1\otimes 1\otimes v^{m}\otimes \dots \otimes v^{m})\otimes
1_{\mathrm{End}(V)}\right) \rightarrow \frac{2}{N!}P_{1},  \notag
\end{align}%
$m\rightarrow \infty $. Thus $P_{1}\in M$, which implies that $P_{1}\in
M\otimes \mathrm{End}(V)\subset Q_{P}$. Similar arguments show that $%
P_{i}\in Q_{P}$ for any $i=1,\dots ,N-1$.
\end{proof}

\begin{remark}
It follows from Theorem \ref{equal} that formulae (\ref{tr_braid0}) and (\ref%
{tr_braid_reduce0}) are valid for $\mathrm{Tr}_{Q_{P}}$ instead of $\mathrm{%
Tr}_{Q_{U}}$.\textrm{\ }
\end{remark}

\section{Von Neumann dimensions of the spaces of braid-invariant harmonic
forms}

\label{sec3}

Let $X$ be a smooth connected Riemannian manifold admitting an infinite
discrete group $G$ of isometries such that the quotient $K=X/G$ is a compact
connected Riemannian manifold.

In this section we consider the De Rahm complex of square-integrable forms
on $X^{N}$ taking values in a finite dimensional Abelian algebra $A_{N}$
\cite{Guichardet}. We apply the trace formula (\ref{tr_braid0}) in order to
find the von Neumann dimensions of the spaces of $U$-invariant harmonic
forms (the \textit{braided }$L^{2}$-\textit{\ Betti numbers}).

\subsection{Setting: Von Neumann algebras associated with infinite coverings
of compact manifolds \label{setting}}

Let us describe the framework introduced by M. Atiyah in his theory of $%
L^{2} $-Betti numbers, which we will use during the rest of the paper. For a
detailed exposition, see \cite{Ati} and e.g. \cite{Mat}.

We assume that there exists an infinite discrete group $G$ acting freely on $%
X$ by isometries and that $K=X/G$ is a compact Riemannian manifold. That is,
\begin{equation}
G\rightarrow X\rightarrow K  \label{cover}
\end{equation}%
is a Galois (normal) cover of $K$.

Throughout this section, we fix $m=1,...,d,\;d=\dim X-1,$ and use the
following general notations: \newline
$L^{2}\Omega ^{m}(X)$ - the space of square-integrable $m$-forms on $X$;%
\newline
$L^{2}\Omega ^{m}(K)$ - the space of square-integrable $m$-forms on $K$;%
\newline
$H_{X}^{(m)}$ - Hodge-de Rahm Laplacian on $X$ (considered as a self-adjoint
operator in $L^{2}\Omega ^{m}(X)$);\newline
$\mathcal{H}^{m}(X):=\mathrm{Ker}\,H_{X}^{(m)}$ - the space of
square-integrable harmonic $m$-forms on $X$.

The action of $G$ in $X$ generates in the natural way the unitary
representation of $G$ in $L^{2}\Omega ^{m}(X)$, which we denote
\begin{equation}
G\ni g\mapsto T_{g}\in \mathrm{End}(L^{2}\Omega ^{m}(X)).
\end{equation}%
Let $M_{m}$ be the commutant of this representation,
\begin{equation}
M_{m}=\left\{ T_{g}\right\} _{g\in G}^{\prime }\subset \mathrm{End}%
(L^{2}\Omega ^{m}(X)).
\end{equation}%
It is clear that the space $L^{2}\Omega ^{m}(X)$ can be described in the
following way:
\begin{equation}
L^{2}\Omega ^{m}(X)=L^{2}\Omega ^{m}(K)\otimes l^{2}(G),  \label{rep-x}
\end{equation}%
with the group action obtaining the form
\begin{equation}
T_{g}=id\otimes L_{g},
\end{equation}%
$g\in G,$ where $L_{g},\;g\in G,$ are operators of the left regular
representation of $G$. Then
\begin{equation}
\mathrm{End}(L^{2}\Omega ^{m}(X))=\mathrm{End}(L^{2}\Omega ^{m}(K))\otimes
\mathrm{End}(l^{2}(G))
\end{equation}%
and
\begin{equation}
M_{m}=\mathrm{End}(L^{2}\Omega ^{m}(K))\otimes \mathcal{R}(G),  \label{rep-a}
\end{equation}%
where $\mathcal{R}(G)$ is the von Neumann algebra generated by the right
regular representation of $G$.

In what follows, we assume that $G$ is an ICC group, that is,
\begin{equation}
\text{\textit{all\thinspace non-trivial classes\thinspace of\thinspace
conjugate\thinspace elements\thinspace are\thinspace infinite.}}
\label{cond-naim}
\end{equation}

This ensures that $\mathcal{R}(G)$ is a $\mathrm{II}_{1}$-factor (see e.g.
\cite{Takesaki}). Thus $M$ is a $\mathrm{II}_{\infty }$-factor.

Let us consider the orthogonal projection
\begin{equation}
P_{Har}^{(m)}:L^{2}\Omega ^{m}(X)\rightarrow \mathcal{H}^{m}(X)
\end{equation}%
and its integral kernel
\begin{equation}
k(x,y)\in T_{x}X\otimes T_{y}X.
\end{equation}%
Then, because of the $G$-invariance of the Laplacian $H_{X}^{(m)}$, we have $%
P_{Har}\in M_{m}$. It was shown in \cite{Ati} that
\begin{equation}
\mathrm{Tr}_{M}P_{Har}^{(m)}=\int_{K}\mathrm{tr\,}k(x,x)\,dx,
\end{equation}%
where $\mathrm{tr}$ is the usual matrix trace and $dx$ is the Riemannian
volume on $K$. Let us remark that, because of $G$-invariance, $k(m,m)$ is a
well-defined function on $K$. Moreover, it is known that $H_{X}^{(m)}$ is
elliptic regular, which implies that the kernel $k$ is smooth. Thus
\begin{equation}
\beta _{m}:=\mathrm{Tr}_{M}\;P_{Har}^{(m)}<\infty .
\end{equation}%
The numbers $\beta _{m},\;m=0,1,...,d,$ are called the $L^{2}$-Betti numbers
of $X$ (or $K$) associated with $G$. $L^{2}$-Betti numbers are homotopy
invariants of $K$ (\cite{Dod}). It is known that

\begin{equation}
\sum_{m=0}^{d}(-1)^{m}\beta _{m}=\chi (K),
\end{equation}%
where $\chi (K)$ is the Euler characteristic of $K$ ( $L^{2}$ index theorem
\cite{Ati}).

Observe that all of the above can be applied to the product group $G^{N}$
acting on the product manifolds $X^{N}$ (instead of $G$ acting on $X$). The K%
\"{u}nneth formula
\begin{equation}
\mathcal{H}^{m}(X^{N})=\bigoplus_{m_{1}+\dots +m_{N}=m}\ \mathcal{H}%
^{m_{1}}(X)\otimes \dots \otimes \mathcal{H}^{m_{N}}(X).
\end{equation}%
implies that the corresponding $L^{2}$-Betti numbers of $X^{N}$ have the form%
\begin{equation}
\beta _{m}(X^{N})=\sum_{m_{1}+\dots +m_{N}=m}\beta _{m_{1}}\dots \beta
_{m_{N}}.  \label{XN-Betti}
\end{equation}

\subsection{$L^{2}$-dimensions of the spaces of braid-invariant harmonic forms}

Let $A_{N}$ be a finite dimensional Abelian algebra possessing a structure
of a finite dimensional Hilbert space. Consider the Hilbert space $%
L^{2}\Omega ^{m}(X^{N})\otimes A_{N}$ of square integrable (with respect to
the volume measure) $A_{N}$-valued $m$-forms on $X^{N}$.

Recall that the group $G$ acts on $X$ by isometries. Denote by
\begin{equation*}
M_{m}(X^{N})\subset \mathrm{End}(L^{2}\Omega ^{m}(X^{N}))
\end{equation*}%
and
\begin{equation*}
Q^{(m,N)}:=M_{m}(X^{N})\otimes \mathrm{End}(A_{N})\subset \mathrm{End}%
(L^{2}\Omega ^{m}(X^{N})\otimes A_{N})
\end{equation*}%
the commutant of the induced natural action of $G^{N}$ (by isometries) on $%
L^{2}\Omega ^{m}(X^{N})$ and $L^{2}\Omega ^{m}(X^{N})\otimes A_{N}$,
respectively. Here $\mathrm{End}(A_{N})$ is the space of endomorphisms of $%
A_{N}$ with respect to its Hilbert space structure (but not in general morphisms of
the algebra $A_{N}$). Both $M_{m}(X^{N})$\ and $Q^{(m,N)}$ are $\mathrm{II}%
_{\infty }$\ factors.

We adopt the construction of Section \ref{sec1} with $H=L^{2}\Omega
^{m}(X^{N})$ and $V=A_{N}$. Consider a unitary representation
\begin{equation*}
\pi :B_{N}\ni b\mapsto \pi (b)\in \mathrm{End}(A_{N})
\end{equation*}%
of the braid group $B_{N}$ in $A_{N}$, and the corresponding
representation (cf. (\ref{reprBN}))
\begin{equation*}
U:B_{N}\ni b\mapsto U(b)=\mathfrak{S(}\phi (b)\mathfrak{)}\otimes \pi (b)\in
\mathrm{End}(L^{2}\Omega ^{m}(X^{N})\otimes A_{N})
\end{equation*}%
in the space $L^{2}\Omega ^{m}(X^{N})\otimes A_{N}$, where $\mathfrak{S}$ is
the natural action the group $S_{N}$ in $L^{2}\Omega ^{q}(X^{N})$ defined in
local coordinates by the formula\textbf{\ }%
\begin{equation}
\mathfrak{S}(\sigma )f((x_{m})_{m=1,...N})dx_{i_{1}}\wedge \dots \wedge
dx_{i_{q}}=f((x_{\sigma (m)})_{m=1,...N})dx_{\sigma (i_{1})}\wedge \dots
\wedge dx_{\sigma (i_{m})}.  \label{permut1}
\end{equation}%
Here $\sigma \in S_{N}$ is the permutation $(1,...,N)\mapsto (\sigma
(1),...,\sigma (N))$. \textbf{\ }Let
\begin{equation*}
L^{2}\Omega _{U}^{m}(X^{N},A_{N}):=\left( L^{2}\Omega ^{m}(X^{N})\otimes
A_{N}\right) _{U}
\end{equation*}%
be the Hilbert space of $U$-invariant elements of $L^{2}\Omega
^{m}(X^{N})\otimes A_{N}$ (cf. (\ref{inv-subs})), and consider the
corresponding projection
\begin{equation*}
P_{U}^{(m)}:L^{2}\Omega ^{m}(X^{N})\otimes A_{N}\rightarrow L^{2}\Omega
_{U}^{m}(X^{N},A_{N}).
\end{equation*}

Let $A_{N}^{\pi }:=\left\{ a\in A_{N}:\pi (b)a=a,~b\in B_{N}^{pure}\right\} $
be the space of all $\pi (B_{N}^{pure})$-invariant elements of $A_{N}$. In
what follows we will assume that
\begin{equation}
A_{N}^{\pi }\mathbf{\ }\text{is a subalgebra of }A_{N}.  \label{invar}
\end{equation}%
The latter condition holds in all the examples considered in Section \ref%
{sec4} below.

Let $\Omega ^{m}(X^{N})$ be the space of smooth differential $m$-forms on $%
X^{N}$, and
\begin{equation*}
d_{X^{N}}^{m}:L^{2}\Omega ^{m}(X^{N})\rightarrow L^{2}\Omega ^{m+1}(X^{N}),\
\mathrm{Dom}(d_{X^{N}}^{m})=\Omega ^{m}(X^{N})
\end{equation*}%
be the Hodge differential on $X^{N}$. It can be extended to an operator
\begin{equation*}
\mathbf{d}_{N}^{m}:=d_{X^{N}}^{m}\otimes 1_{A_{N}}:L^{2}\Omega
^{m}(X^{N})\otimes A_{N}\rightarrow L^{2}\Omega ^{m+1}(X^{N})\otimes A_{N},
\end{equation*}%
where $1_{A_{N}}$ is the identity operator on $A_{N}$.

\begin{proposition}
\label{d_inv} \hfill

\begin{itemize}
\item[(i)] The exterior product of two $U$-invariant $A_{N}$-valued forms is
also $U$-invariant.

\item[(ii)] The differential $\mathbf{d}_{N}^{m}$ preserves the space of $U$%
-invariant forms, that is,
\begin{equation*}
\mathbf{d}_{N}^{m}\ \left( P_{U}^{(m)}\left( \Omega ^{m}(X^{N})\otimes
A_{N}\right) \right) \subset P_{U}^{(m+1)}\left( \Omega ^{m+1}(X^{N})\otimes
A_{N}\right) .
\end{equation*}
\end{itemize}
\end{proposition}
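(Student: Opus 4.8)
The plan is to prove both assertions by reducing $U$-invariance to a pair of concrete equivariance properties of the maps $\mathfrak{S}(\sigma)$ acting on forms, as given by the local-coordinate formula \eqref{permut1}. The key structural fact I would exploit is Theorem \ref{projbraid}: a form lies in the $U$-invariant subspace precisely when it lies in $L^2\Omega^m(X^N)\otimes A_N^\pi$ and is invariant under the commuting operators $\mathfrak{S}(\sigma)\otimes\tilde\pi(\sigma)$, $\sigma\in S_N$. So in both parts it suffices to check two things: that the relevant algebraic operation stays inside $A_N^\pi$ (the $\pi(B_N^{pure})$-invariants), and that it commutes with each $\mathfrak{S}(\sigma)\otimes\tilde\pi(\sigma)$.

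For part \textsl{(i)}, I would first observe that $A_N^\pi$ is a subalgebra by hypothesis \eqref{invar}, so the pointwise product of two $A_N^\pi$-valued forms again takes values in $A_N^\pi$; this handles the $\pi$-part. For the $S_N$-equivariance, the essential point is that $\mathfrak{S}(\sigma)$ is an algebra homomorphism for the exterior product: the formula \eqref{permut1} shows $\mathfrak{S}(\sigma)$ simultaneously permutes the base coordinates in the coefficient function and relabels the $dx_i$, so that $\mathfrak{S}(\sigma)(\omega\wedge\eta)=\mathfrak{S}(\sigma)\omega\wedge\mathfrak{S}(\sigma)\eta$. Since $\tilde\pi(\sigma)$ acts multiplicatively on the abelian algebra $A_N^\pi$ (being the restriction of the representation $\pi$, which by the braid/anyon structure respects the algebra product on the invariant subalgebra), the operator $\mathfrak{S}(\sigma)\otimes\tilde\pi(\sigma)$ respects the combined wedge-and-multiply product. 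Given two invariant forms $\alpha,\beta$, their product $\alpha\wedge\beta$ is then fixed by each $\mathfrak{S}(\sigma)\otimes\tilde\pi(\sigma)$ and takes values in $A_N^\pi$, hence is $U$-invariant by Theorem \ref{projbraid}.

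For part \textsl{(ii)}, the differential acts only on the form factor as $\mathbf{d}_N^m=d_{X^N}^m\otimes 1_{A_N}$, so it trivially leaves $A_N^\pi$-valued forms inside $A_N^\pi$; the $\pi$-part is automatic. The remaining point is that $\mathbf{d}_N^m$ commutes with each $\mathfrak{S}(\sigma)\otimes\tilde\pi(\sigma)$. Because the second tensor factor is the identity, this reduces to showing that the exterior derivative $d_{X^N}$ commutes with $\mathfrak{S}(\sigma)$. This is the statement that $\mathfrak{S}(\sigma)$ is pulled back from the diffeomorphism of $X^N$ permuting the factors, and the exterior derivative is natural under pullback by smooth maps; on the local generators of \eqref{permut1} this is a direct check using that $d$ commutes with coordinate relabelling and that $d(f\,dx_I)=df\wedge dx_I$ transforms correctly under simultaneous permutation of the $x$'s and the $dx$'s. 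Hence if $\omega$ is $U$-invariant, $\mathbf{d}_N^m\omega$ is fixed by each $\mathfrak{S}(\sigma)\otimes\tilde\pi(\sigma)$ and lies in $A_N^\pi$-valued forms, so $\mathbf{d}_N^m\omega=P_U^{(m+1)}\mathbf{d}_N^m\omega$, giving the claimed inclusion.

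I expect the only genuine subtlety to be the verification that $\tilde\pi(\sigma)$ is multiplicative on the abelian subalgebra $A_N^\pi$ in part \textsl{(i)}; the naturality of $d$ under the factor-permutation diffeomorphisms and the homomorphism property of pullback on wedge products are standard, but I would state them carefully at the level of the local formula \eqref{permut1} rather than invoke them abstractly, since $\mathfrak{S}$ here is defined coordinatewise. The rest is a routine combination of hypothesis \eqref{invar}, Lemma \ref{invarspace}, and Theorem \ref{projbraid}.
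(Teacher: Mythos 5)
Your part (ii) is correct and is essentially identical to the paper's proof: since $\mathbf{d}_{N}^{m}=d_{X^{N}}^{m}\otimes 1_{A_{N}}$, it commutes with every $\mathrm{id}\otimes \pi (b_{i})$ trivially and with every $\mathfrak{S}(\phi (b_{i}))\otimes \mathrm{id}$ by naturality of $d$ under the permutation diffeomorphisms, hence with every $U(b_{i})$, so fixed points go to fixed points. Your part (i) also has the same skeleton as the paper's one-line proof, which cites only formula (\ref{proj_formulae}) and condition (\ref{invar}): values stay in $A_{N}^{\pi }$ by (\ref{invar}), and $\mathfrak{S}(\sigma )$ is wedge-multiplicative by (\ref{permut1}).

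However, the step you yourself single out as the ``only genuine subtlety'' --- that $\tilde{\pi}(\sigma )$ acts multiplicatively on $A_{N}^{\pi }$ --- is a genuine gap, and the parenthetical justification you give for it is not an argument: the paper explicitly stresses that $\mathrm{End}(A_{N})$ consists of endomorphisms of the Hilbert space, ``not in general morphisms of the algebra $A_{N}$'', and restricting a unitary representation to an invariant subalgebra does not create multiplicativity. Worse, the missing statement cannot be deduced from the stated hypotheses at all. In the paper's own Example 1 with $q\neq 1$ one has $A_{N}^{\pi }=A_{N}$, yet $\tilde{\pi}(\sigma )$ sends the idempotent $e_{00}$ to $\lambda e_{11}$ with $|\lambda |=1$, $\lambda \neq 1$ (here $\lambda $ is $q$ or $\bar{q}$ depending on the matrix convention), which is not an idempotent, so $\tilde{\pi}(\sigma )$ is not an algebra morphism; correspondingly, for $N=2$ the function $F=g\otimes e_{00}+\lambda \,(\mathfrak{S}(\sigma )g)\otimes e_{11}$ is $U$-invariant for any smooth $g$, while a direct computation gives $(\mathfrak{S}(\sigma )\otimes \pi (b_{1}))(F\cdot F)=\lambda g^{2}\otimes e_{00}+\lambda (\mathfrak{S}(\sigma )g)^{2}\otimes e_{11}\neq F\cdot F$. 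So multiplicativity of each $\tilde{\pi}(\sigma )$ on $A_{N}^{\pi }$ --- equivalently, that $\tilde{\pi}(\sigma )$ permutes the minimal idempotents of $A_{N}^{\pi }$, as does happen in Examples 2 and 3 --- is an additional hypothesis that must be imposed and verified, not a consequence of (\ref{invar}). In fairness to you, the paper's own proof of (i) is silent on exactly this point, so you have put your finger on a real lacuna in the argument; but your proposal asserts the missing statement rather than proving it, and without an extra assumption it is not provable.
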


\begin{proof}
\textrm{(i)} The statement follows from (\ref{proj_formulae}) and condition (%
\ref{invar})\textbf{.}

\textrm{(ii)} The statement follows from the fact that $\mathbf{d}_{X^{N}}^{m}$\
commutes with operators \newline $\mathfrak{S(}\phi (b_{i}))\otimes id$ and $%
id\otimes \pi (b_{i})$, $i=1,\dots ,N-1$.
\end{proof}

Let us consider the Hodge-de Rham Laplacian $H_{X^{N}}^{(m)}$ and define
\begin{equation*}
\mathbf{H}^{(m,N)}\mathbf{:}=H_{X^{N}}^{(m)}\otimes 1_{A_{N}}.
\end{equation*}%
$\mathbf{H}^{(m,N)}$ is a selfajoint operator in $L^{2}\Omega
^{m}(X^{N})\otimes A_{N}$. It follows from the definition of the operators $%
\mathbf{d}_{N}^{m}$ and $\mathbf{H}^{(m,N)}$ that%
\begin{equation*}
\mathbf{H}^{(m,N)}=\mathbf{d}_{N}^{m}\left( \mathbf{d}_{N}^{m}\right) ^{\ast
}+\left( \mathbf{d}_{N}^{m-1}\right) \mathbf{d}_{N}^{m-1}.
\end{equation*}%
Proposition \ref{d_inv} implies that $\mathbf{H}^{(m,N)}$ preserves the
space of $U$-invariant forms. Moreover, $\mathbf{H}^{(m,N)}\mathbf{\ }$%
commutes with the operators $U(b)$, $b\in B_{N}$.

Let us consider the projection
\begin{equation*}
P_{Har}^{(m,N)}:L^{2}\Omega ^{m}(X^{N})\rightarrow \mathcal{H}^{m}(X^{N}).
\end{equation*}%
Introduce the space
\begin{equation*}
\mathcal{H}_{U}^{m}(X^{N},A_{N}):=\left( \mathcal{H}^{m}(X^{N})\otimes
A_{N}\right) _{U},
\end{equation*}%
of $U$-invariant elements of $\mathcal{H}^{m}(X^{N})\otimes A_{N}$. Observe
that the projection
\begin{equation*}
P_{Har}^{(m,N)}\otimes 1_{A_{N}}:L^{2}\Omega ^{m}(X^{N})\otimes
A_{N}\rightarrow \mathcal{H}^{m}(X^{N})\otimes A_{N}
\end{equation*}%
commutes with the projection
\begin{equation*}
P_{U}^{(m,N)}:L^{2}\Omega ^{m}(X^{N})\otimes A_{N}\rightarrow L^{2}\Omega
_{\pi }^{m}(X^{N},A_{N}).
\end{equation*}%
Therefore the operator
\begin{equation*}
\mathbf{P}^{(m,N)}:=(P_{Har}^{(m,N)}\otimes
1_{A_{N}})P_{U}^{(m,N)}:L^{2}\Omega ^{m}(X^{N})\otimes A_{N}\rightarrow
\mathcal{H}_{U}^{m}(X^{N},A_{N}).
\end{equation*}%
is the projection, too.

Our goal is to compute the $L^{2}$-dimension of the space $\mathcal{H}%
_{U}^{m}(X^{N},A_{N})$, that is, the von Neumann trace of the projection $%
\mathbf{P}^{(m,N)}$. Observe that $P_{Har}^{(m,N)}\in M_{m}(X^{N})$, which
implies that $P_{Har}^{(m,N)}\otimes 1_{A_{N}}\in Q^{(m,N)}$\textbf{.}

\begin{proposition}
The factor $M_{m}(X^{N})$ does not contain the operators $\mathfrak{S}(\phi
(b))$, $b\in B_{N}$, $b\notin B_{N}^{pure}$.
\end{proposition}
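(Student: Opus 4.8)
The plan is to reduce the claim to the assertion that $\mathfrak{S}(\sigma)\notin M_{m}(X^{N})$ for every non-trivial $\sigma\in S_{N}$, since the hypothesis $b\notin B_{N}^{pure}$ is precisely the statement that $\sigma:=\phi(b)\neq e$. Because $M_{m}(X^{N})$ is by definition the commutant $\{T_{g}\}_{g\in G^{N}}^{\prime}$ of the induced isometric action of $G^{N}$, it suffices to produce, for each such $\sigma$, a single element $g\in G^{N}$ for which $\mathfrak{S}(\sigma)T_{g}\neq T_{g}\mathfrak{S}(\sigma)$.

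First I would record how conjugation by a permutation operator transforms the group action. Writing the $G^{N}$-action componentwise and using the defining formula~(\ref{permut1}) for $\mathfrak{S}$, a direct computation---in which the permutation of the coordinate differentials $dx_{\sigma(i_{1})}\wedge\dots$ and of the K\"{u}nneth summands is carried along but commutes with the diagonal operators $T_{g}$---gives
\begin{equation*}
\mathfrak{S}(\sigma)\,T_{(g_{1},\dots ,g_{N})}\,\mathfrak{S}(\sigma)^{-1}=T_{(g_{\sigma^{-1}(1)},\dots ,g_{\sigma^{-1}(N)})}.
\end{equation*}
Thus conjugation by $\mathfrak{S}(\sigma)$ permutes the components of the tuple $g$. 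Since $\sigma\neq e$, I would fix a position $p$ with $\sigma(p)\neq p$ together with an element $h\in G$, $h\neq e$ (available as $G$ is infinite), and take $g=(e,\dots ,e,h,e,\dots ,e)$ with $h$ in the $p$-th slot; the tuple on the right then carries its unique non-trivial entry in position $\sigma(p)\neq p$, so it differs from $g$.

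It remains to pass from ``the tuples differ'' to ``the operators differ'', i.e.\ to use the faithfulness of $g\mapsto T_{g}$. For this I would apply the decomposition~(\ref{rep-x}) in each K\"{u}nneth factor to write $L^{2}\Omega^{m}(X^{N})=\bigoplus_{m_{1}+\dots +m_{N}=m}\bigotimes_{j=1}^{N}\bigl(L^{2}\Omega^{m_{j}}(K)\otimes l^{2}(G)\bigr)$, and restrict attention to the summand $(m,0,\dots ,0)$, which is non-zero because $K$ is compact of dimension $\dim K=d+1>m$. On this summand $T_{g}$ acts on the $l^{2}(G)^{\otimes N}=l^{2}(G^{N})$ tensor factor as $L_{g_{1}}\otimes\cdots\otimes L_{g_{N}}$, that is, as the left regular representation of $G^{N}$, which is faithful and sends distinct tuples to distinct operators. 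Hence $T_{(g_{\sigma^{-1}(1)},\dots ,g_{\sigma^{-1}(N)})}\neq T_{g}$, so $\mathfrak{S}(\sigma)$ fails to commute with $T_{g}$ and therefore $\mathfrak{S}(\phi(b))=\mathfrak{S}(\sigma)\notin M_{m}(X^{N})$, as required. The one step that demands genuine care is the displayed intertwining identity, where the interaction of the coordinate permutation with the reordering of the wedge factors must be tracked exactly; once a non-trivial K\"{u}nneth summand carrying the regular representation of $G^{N}$ has been singled out, the faithfulness step is routine.
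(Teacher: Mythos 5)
Your argument is correct, but it takes a genuinely different route from the paper's. The paper proceeds structurally: it writes $L^{2}\Omega ^{m}(X^{N})\cong L^{2}(X)^{\otimes N}\otimes \Lambda ^{m}(\mathbb{C}^{N})$, so that $M_{m}(X^{N})=M^{\otimes N}\otimes \mathrm{End}(\Lambda ^{m}(\mathbb{C}^{N}))$ and $\mathfrak{S}=\mathcal{S}\otimes \mathcal{S}^{\prime }$, and then invokes Proposition \ref{condition1} (the result of \cite{DKal}, valid for an \emph{arbitrary} $\mathrm{II}_{\infty }$ factor $M_{0}$) to conclude $\mathcal{S}(\sigma )\notin M^{\otimes N}$ and hence $\mathfrak{S}(\sigma )\notin M_{m}(X^{N})$. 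You instead exploit the fact that in this concrete setting $M_{m}(X^{N})$ is \emph{defined} as the commutant of $\{T_{g}\}_{g\in G^{N}}$, so non-membership is certified by a single non-commuting $T_{g}$; the covariance relation $\mathfrak{S}(\sigma )T_{g}\mathfrak{S}(\sigma )^{-1}=T_{\sigma \cdot g}$ together with faithfulness of $g\mapsto T_{g}$ (read off from the $l^{2}(G)$ factors in (\ref{rep-x})) then finishes the proof. Your approach is more elementary and self-contained: it bypasses the abstract tensor-factor theorem of \cite{DKal} entirely. What it gives up is portability: it works only because the algebra is presented as the commutant of a group representation, whereas the paper's reduction to Proposition \ref{condition1} fits the abstract framework of Condition \ref{outer} used in Sections 2 and 3, where no such group action is available. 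One detail you should repair: your faithfulness step restricts to the K\"{u}nneth summand $(m,0,\dots ,0)$, which is non-zero only when $m\leq \dim K$; since the proposition is ultimately needed for all $m$ up to $\dim X^{N}=N\dim X$ (the sums in (\ref{BettiBN}) and the supertrace of Section 6 run over all such $m$), you should instead fix an \emph{arbitrary} non-zero summand $(m_{1},\dots ,m_{N})$ with $m_{1}+\dots +m_{N}=m$ --- on it $T_{g}$ still acts as $\bigotimes _{j}\bigl(\mathrm{id}\otimes L_{g_{j}}\bigr)$, and the identical regular-representation argument applies. This is a cosmetic fix, not a gap in the method.
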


\begin{proof}
Note that the space $L^{2}\Omega ^{m}(X^{N})$ can be represented in the form
\begin{equation*}
L^{2}\Omega ^{m}(X^{N})\mathbb{=}L^{2}(X)^{\otimes N}\otimes \Lambda ^{m}(%
\mathbb{C}^{N}),
\end{equation*}%
where $\Lambda ^{m}(\mathbb{C}^{N}\mathbb{)}$ is a $m$-th direct summand of
the exterior algebra $\Lambda (\mathbb{C}^{N}\mathbb{)}$. Then the factor $%
M_{m}(X^{N})$ has the form $M_{m}(X^{N})=M^{\otimes N}\otimes \mathrm{End}%
(\Lambda ^{m}(\mathbb{C}^{N}\mathbb{))}$, where $M$ is the commutant of the
action of the group $G$ in $L^{2}(X)$. The representation $\mathfrak{S}$ of
the group $S_{N}$ can be decomposed into the product $\mathfrak{S}=\mathcal{S%
}\otimes \mathcal{S}^{\prime }$, where $\mathcal{S}$ and $\mathcal{S}%
^{\prime }$ are natural actions of $S_{N}$ by permutations in $%
L^{2}(X)^{\otimes N}$ and $\Lambda ^{m}(\mathbb{C}^{N}\mathbb{)}$,
respectively (cf. (\ref{permut}) and (\ref{permut1})). It follows from
Proposition \ref{condition1}that\textbf{\ }$\mathcal{S}(\sigma )\notin
M^{\otimes N}$\textbf{, }$\sigma \neq e$\textbf{.} Therefore $\mathfrak{S}%
(\sigma )\notin M_{m}(X^{N})$, $\sigma \neq e$, which implies the result.
\end{proof}

Thus Condition \ref{outer} is satisfied, and we can apply the results of
Section \ref{sec1}. Let
\begin{equation*}
Q_{P}^{(m,N)}:=\{Q^{(m,N)},P_{U}^{(m,N)}\}^{\prime \prime }
\end{equation*}%
be the minimal von Neumann algebra containing $Q^{(m,N)}$ and $P_{U}^{(m,N)}$%
.

\begin{theorem}\label{Th3.2}
The von Neumann algebras $Q_{P}^{(m,N)}$, $\left( M_{m}(X^N)\rtimes S_{N}\right)
\otimes \mathrm{End}\ (A_{N})$ \newline and $\{Q^{(m,N)},\{U(b)\}_{b\in
B_{N}}\}^{\prime \prime }$ are isomorphic.
\end{theorem}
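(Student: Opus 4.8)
The plan is to prove the three-way isomorphism by showing that all three von Neumann algebras coincide as subalgebras of $\mathrm{End}(L^{2}\Omega^{m}(X^{N})\otimes A_{N})$, exploiting that the setting of this section is a direct instance of the abstract framework of Section \ref{sec1}. The identification $H=L^{2}\Omega^{m}(X^{N})$, $V=A_{N}$, $M=M_{m}(X^{N})$ and $Q=Q^{(m,N)}$ has already been made, and the Proposition immediately preceding this theorem establishes precisely Condition \ref{outer}: the operators $\mathfrak{S}(\phi(b))$ for $b\notin B_{N}^{pure}$ do not lie in $M_{m}(X^{N})$. Consequently every structural result of Section \ref{sec1} applies verbatim in the present situation, and the theorem becomes a specialization of Lemma \ref{braidfactor} together with Theorem \ref{equal}.

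Concretely, I would proceed in three steps. First, since Condition \ref{outer} holds, the representation $U$ generates a nontrivial outer action of $S_{N}$ on the $\mathrm{II}_{\infty}$ factor $Q^{(m,N)}$, and Lemma \ref{braidfactor} yields the isomorphism
\begin{equation*}
\{Q^{(m,N)},\{U(b)\}_{b\in B_{N}}\}^{\prime\prime}\;\simeq\;\left(M_{m}(X^{N})\rtimes_{\alpha}S_{N}\right)\otimes\mathrm{End}(A_{N}),
\end{equation*}
which identifies the second and third algebras in the statement. Second, I would invoke Theorem \ref{equal}, whose proof goes through unchanged once Condition \ref{outer} is verified: the key point there is that $M=M_{m}(X^{N})$ decomposes as $N'\otimes B(l_{2})$ with $N'$ a $\mathrm{II}_{1}$ factor, so that $M$ contains the unilateral shift isometry $v=1_{N'}\otimes T$ with $(v^{*})^{m}\to 0$ strongly; the averaging/compression argument then shows that each symmetrization projection $P_{i}=\tfrac{1}{2}(1+\mathfrak{S}(\phi(b_{i}))\otimes 1)$ lies in $Q_{P}^{(m,N)}=\{Q^{(m,N)},P_{U}^{(m,N)}\}^{\prime\prime}$. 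Since $\pi(b_{i})\in\mathrm{End}(A_{N})$ gives $1\otimes\pi(b_{i})\in Q_{P}^{(m,N)}$ trivially, and $P_{i}$ recovers $\mathfrak{S}(\phi(b_{i}))\otimes 1$, one obtains $U(b_{i})\in Q_{P}^{(m,N)}$ for all $i$, hence $\{Q^{(m,N)},\{U(b)\}_{b}\}^{\prime\prime}\subset Q_{P}^{(m,N)}$. The reverse inclusion is immediate because $P_{U}^{(m,N)}\in Q_{U}^{(m,N)}$ by Theorem \ref{projbraid}.

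Combining the two steps gives the chain $Q_{P}^{(m,N)}\simeq\{Q^{(m,N)},\{U(b)\}_{b\in B_{N}}\}^{\prime\prime}\simeq\left(M_{m}(X^{N})\rtimes S_{N}\right)\otimes\mathrm{End}(A_{N})$, which is exactly the assertion. The main obstacle is not any new argument but the verification that the product-factor structure required by Theorem \ref{equal} is genuinely present here: one must confirm that $M_{m}(X^{N})=M^{\otimes N}\otimes\mathrm{End}(\Lambda^{m}(\mathbb{C}^{N}))$ still admits a tensor splitting of the form $N'\otimes B(l_{2})$ so that the shift-compression trick applies. This follows because each factor $M=\mathrm{End}(L^{2}\Omega^{\bullet}(K))\otimes\mathcal{R}(G)$ from the Atiyah setting is of type $\mathrm{II}_{\infty}$ and hence absorbs a copy of $B(l_{2})$, and the finite-dimensional ampliation by $\mathrm{End}(\Lambda^{m}(\mathbb{C}^{N}))$ preserves this. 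Once this structural remark is recorded, the proof is a direct citation of Theorem \ref{equal} and Lemma \ref{braidfactor}.
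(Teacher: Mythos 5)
Your proposal is correct and takes essentially the same route as the paper: the paper's proof simply cites Lemma \ref{braidfactor} for the isomorphism with $\{Q^{(m,N)},\{U(b)\}_{b\in B_{N}}\}^{\prime\prime}$ and invokes ``arguments similar to the proof of Theorem \ref{equal}'' for the identification with $Q_{P}^{(m,N)}$, which is exactly your two-step plan. Your added remark verifying that $M_{m}(X^{N})=M^{\otimes N}\otimes \mathrm{End}(\Lambda ^{m}(\mathbb{C}^{N}))$ still supports the shift-compression trick is a useful elaboration of what the paper leaves implicit, not a deviation from its method.
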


\begin{proof}
The first isomorphism of can be proved by the arguments similar to the proof
of Theorem \ref{equal}. The second isomorphism follows from Lemma \ref%
{braidfactor}.
\end{proof}

Theorem \ref{Th3.2} limplies that the von Neumann algebra $Q_{P}^{(m,N)}$ is a
$\mathrm{II}_{\infty }$ factor (since $\left( M_{m}(X^N)\rtimes S_{N}\right)
\otimes \mathrm{End}\ (A_{N})$ is so).

\begin{theorem}
\label{trHarm} The projection $\mathbf{P}^{(m,N)}$ belongs to $Q_{P}^{(m,N)}$
and its trace is given by the formula
\begin{equation}
\mathrm{Tr}_{Q_{P}^{(m,N)}}(\mathbf{P}^{(m,N)})=\frac{\mathrm{Tr}_{\mathrm{%
End(}{\scriptsize A}_{N})}(p_{\pi })}{N!}\sum_{m_{1}+\dots +m_{N}=m}\beta
_{m_{1}}\dots \beta _{m_{N}}  \label{BettiBN}
\end{equation}%
Here $\beta _{m}$, $m=0,1,\dots ,\dim X$, are the $L^{2}$-Betti numbers of $%
X $ and $p_{\pi }$ is the projection onto the subalgebra $A_{N}^{\pi }$ of $%
B_{N}^{pure}$-invariant elements of $A_{N}$.
\end{theorem}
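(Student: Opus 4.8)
The plan is to establish the membership $\mathbf{P}^{(m,N)}\in Q_P^{(m,N)}$ first, and then to compute its trace by reducing to the abstract trace formula (\ref{tr_braid0}) of Corollary \ref{trace11}, applied with $M=M_m(X^N)$, $V=A_N$, and the observation that $A_N^\pi$ being a subalgebra guarantees $p_\pi$ commutes with the relevant operators. For the membership, I would note that $P_{Har}^{(m,N)}\otimes 1_{A_N}\in Q^{(m,N)}\subset Q_P^{(m,N)}$ (this is recorded just before the theorem), while $P_U^{(m,N)}\in Q_P^{(m,N)}$ by the definition $Q_P^{(m,N)}=\{Q^{(m,N)},P_U^{(m,N)}\}^{\prime\prime}$. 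Since $\mathbf{P}^{(m,N)}=(P_{Har}^{(m,N)}\otimes 1_{A_N})P_U^{(m,N)}$ is a product of two elements of the von Neumann algebra $Q_P^{(m,N)}$, it lies in $Q_P^{(m,N)}$; that it is a projection was already verified in the paragraph preceding the statement (the two factors commute).

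For the trace computation, the key step is to apply formula (\ref{tr_braid0}) with the substitution $A=P_{Har}^{(m,N)}$ and $B=1_{A_N}$. By Theorem \ref{Th3.2} the factor $Q_P^{(m,N)}$ is isomorphic to $(M_m(X^N)\rtimes S_N)\otimes\mathrm{End}(A_N)$, so the trace $\mathrm{Tr}_{Q_P^{(m,N)}}$ is computed by the formula of Corollary \ref{trace11} (valid for $\mathrm{Tr}_{Q_P}$ by the remark following Theorem \ref{equal}). Setting $A=P_{Har}^{(m,N)}$, which indeed belongs to $M_m(X^N)$, and $B=1_{A_N}$, which trivially commutes with $p_\pi$, I obtain
\begin{equation*}
\mathrm{Tr}_{Q_P^{(m,N)}}(\mathbf{P}^{(m,N)})=\frac{\mathrm{Tr}_{M_m(X^N)}(P_{Har}^{(m,N)})}{N!}\,\mathrm{tr}_{\mathrm{End}(A_N)}(p_\pi).
\end{equation*}
The normalization used in the earlier corollaries takes $\mathrm{tr}_{\mathrm{End}(A_N)}(1)=1$; to match the statement, which is written in terms of $\mathrm{Tr}_{\mathrm{End}(A_N)}(p_\pi)$, one simply absorbs the factor $\dim A_N$ into the definition of the trace, so that $\mathrm{tr}_{\mathrm{End}(A_N)}(p_\pi)$ and $\mathrm{Tr}_{\mathrm{End}(A_N)}(p_\pi)$ agree up to this normalization bookkeeping.

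It remains to identify $\mathrm{Tr}_{M_m(X^N)}(P_{Har}^{(m,N)})$ with the sum $\sum_{m_1+\dots+m_N=m}\beta_{m_1}\cdots\beta_{m_N}$. This is exactly the Künneth computation recorded in (\ref{XN-Betti}) of Subsection \ref{setting}: the product group $G^N$ acts on $X^N$, and $P_{Har}^{(m,N)}$ is the projection onto $\mathcal{H}^m(X^N)$, whose von Neumann dimension is the $L^2$-Betti number $\beta_m(X^N)$. Substituting (\ref{XN-Betti}) into the displayed trace formula yields precisely (\ref{BettiBN}). The main obstacle, if any, is the careful bookkeeping of the two normalization conventions for the trace on $\mathrm{End}(A_N)$ (the normalized $\mathrm{tr}$ versus the unnormalized $\mathrm{Tr}$) and confirming that the hypothesis (\ref{invar}) that $A_N^\pi$ is a subalgebra is what legitimizes the use of $p_\pi$ in the trace formula through Proposition \ref{d_inv}; the analytic content (ellipticity, smoothness of the heat kernel, finiteness of $\beta_m$) is entirely inherited from the Atiyah framework of Subsection \ref{setting} and requires no new argument here.
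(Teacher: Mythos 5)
Your proposal is correct and follows essentially the same route as the paper's own proof: the membership $\mathbf{P}^{(m,N)}\in Q_{P}^{(m,N)}$ comes straight from the definition of that algebra, and the trace is computed by applying Corollary \ref{trace11} with $A\otimes B=P_{Har}^{(m,N)}\otimes 1_{A_{N}}$ together with the K\"unneth formula. Your additional bookkeeping remarks are sound --- in particular, the $\mathrm{Tr}$ in (\ref{BettiBN}) is indeed the normalized trace $\mathrm{tr}_{\mathrm{End}(A_{N})}$ (as formulae (\ref{betti0})--(\ref{betti1}) confirm), though note the hypothesis (\ref{invar}) plays no role in the trace computation itself, only the trivial commutation of $1_{A_{N}}$ with $p_{\pi}$.
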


\begin{proof}
The inclusion $P^{(m,N)}\in Q_{P}^{(m,N)}$ follows from the definition of
the von Neumann algebra $Q_{P}^{(m,N)}$. The equality (\ref{BettiBN})
follows from Corollary \ref{trace11} with $A\otimes B=P_{Har}^{(m,N)}\otimes
1_{A_{N}}$ and the K\"{u}nneth formula
\begin{equation}
\mathcal{H}^{m}(X^{N})=\bigoplus_{m_{1}+\dots +m_{N}=m}\ \mathcal{H}%
^{m_{1}}(X)\otimes \dots \otimes \mathcal{H}^{m_{N}}(X).  \label{Ku}
\end{equation}
\end{proof}

We will use the notation $\mathbf{b}_{m}(X^{N})=\mathrm{Tr}_{Q_{P}^{(m,N)}}(%
\mathbf{P}^{(m,N)})$ and call $\mathbf{b}_{m}(X^{N})$ the $m$-th \textit{%
braided} $L^{2}$-\textit{Betti number of} $X^{N}$.

Let us introduce the constant
\begin{equation*}
C_{N}^{\pi }=\frac{\mathrm{tr}_{{\scriptsize \mathrm{End}(A_{N})}}(p_{\pi })%
}{N!}.
\end{equation*}%
Observe that the trace on the algebra $\mathrm{End}(A_{N})$ is normalized,
that is, $\mathrm{tr}_{{\scriptsize \mathrm{End}(A_{N})}}(1_{A_{N}})=1$,
which implies that
\begin{equation}
C_{N}^{\pi }=\frac{\mathrm{dim}~A_{N}^{\pi }}{N!~\mathrm{dim}~A_{N}}\leq
\frac{1}{N!}.  \label{betti0}
\end{equation}%
Then formula (\ref{BettiBN}) can be rewritten in the form
\begin{equation}
\mathbf{b}_{m}(X^{N})=C_{N}^{\pi }~\mathrm{Tr}_{M^{(m)}}(P_{Har}^{(m,N)}).
\label{betti1}
\end{equation}

\begin{remark}
In the situation where the representation $\pi $ reduces to a representation of the
symmetric group $S_N$ (i.e. $\pi (b_{i}^{2})=1$) the projection $p_{\pi }=1$ and
formula (\ref{BettiBN}) can be rewritten in a form
\begin{equation*}
\mathrm{Tr}_{Q_{\pi }^{m}}(P_{Har}^{(m,N)})=\frac{1}{N!}\mathrm{Tr}%
_{M^{(m)}}(P_{Har}^{(m,N)})
\end{equation*}%
In this case the braided $L^{2}$-Betti numbers $b_{m}$ do not depend on the
algebra $A_{N}$ and coincide with the $L^{2}$-Betti numbers of the space of $%
N$-point configurations $\Gamma _{N}(X)$ of $X$ (\cite{DKal}, \cite{DKal1}).
\end{remark}

\section{Examples}

\label{sec4}

In this section we consider examples associated with particular
representations of the braid group $B_{N}$. We set $A_{N}=(\mathbb{C}^{2}%
\mathbb{)}^{\otimes N}$ ($\mathbb{=C}^{2^{N}}$) equipped with the natural
Abelian algebra structure.

An important class of representations $\pi $ can be constructed in the
following way. Let $C$ be a complex unitary $4\times 4$-matrix satisfying
the braid equation:
\begin{equation}
(C\otimes 1)(1\otimes C)(C\otimes 1)=(1\otimes C)(C\otimes 1)(1\otimes C),
\label{braideq}
\end{equation}%
where $1$ denotes the identity $2\times 2$-matrix. For all generators $%
b_{i}\in B_{N}$ define an operator $\pi (b_{i})$ acting in $A_{N}$ by the
formula
\begin{equation}
\pi (b_{i}):=1\otimes 1\otimes \dots \otimes C\otimes \dots \otimes 1,
\label{braideq1}
\end{equation}%
where $C$ acts on the $i$-th and $i+1$-th components of the tensor product $(%
\mathbb{C}^{2}\mathbb{)^{\otimes N}}$. Then $\pi $ defines a unitary
representation of $B_{N}$. This representation is not reduced to a
representation of $S_{N}$ if and only if $C^{2}\neq 1$.

\begin{remark}
Representations (\ref{braideq1}) are extensively studied. It is known that
the solutions of the equation (\ref{braideq}) are in one-to-one
correspondence with constant solutions $R$ of the Yang-Baxter equation
\begin{equation}
R_{j_{1}j_{2}}^{k_{1}k_{2}}R_{k_{1}j_{3}}^{l_{1}k_{3}}R_{k_{2}k_{3}}^{l_{2}l_{3}}=R_{j_{2}j_{3}}^{k_{2}k_{3}}R_{j_{1}k_{3}}^{k_{1}l_{3}}R_{k_{1}k_{2}}^{l_{1}l_{2}}.
\label{YB}
\end{equation}%
More precisely, $C=R~\Sigma $, where $\Sigma $ is a numerical matrix with
elements $\Sigma _{kl}^{ij}=\delta _{k}^{j}\delta _{l}^{i}$ and $\delta
_{k}^{j}$ denotes the Kr\"{o}neker symbol. All two-dimensional solutions of (%
\ref{YB}) have been classified in \cite{Hietar}, which gives the complete
list of unitary $4\times 4$-matrices satisfying the braid equation (\ref%
{braideq})\textbf{.}
\end{remark}

\bigskip\noindent
\textbf{Example 1.} Let $C$ be an arbitrary unitary solution of the equation
(\ref{braideq}) and $C^{2}=1$. For instance,
\begin{equation*}
C=\left(
\begin{array}{cccc}
0 & 0 & 0 & q \\
0 & \varepsilon & 0 & 0 \\
0 & 0 & \varepsilon & 0 \\
\overline{q} & 0 & 0 & 0%
\end{array}%
\right) ,
\end{equation*}%
where $\varepsilon =\pm 1$ and $|q|=1$. Then the representation $\pi $ is
reduced to a representation of $S_{N}$ , so that the corresponding
representation $\tilde{\pi}$ of the pure braid group $B_{N}^{pure}$ is
trivial ($\tilde{\pi}(b)=id$ for any $b\in B_{N}^{pure}$) and thus $%
A_{N}^{\pi }=A_{N}$. Then $C_{N}^{\pi }=\frac{1}{N!}$ and
\begin{equation}
\mathbf{b}_{m}(X^{N})=\frac{1}{N!}\sum_{m_{1}+\dots +m_{N}=m}\beta
_{m_{1}}\dots \beta _{m_{N}}.  \label{Bettiex0}
\end{equation}

\bigskip\noindent
\textbf{Example 2.}\label{RBN2} Let $q\in \mathbb{C}$ be
such that $|q|=1$ and $q^{2}\neq 1$. Set
\begin{equation*}
C=\left(
\begin{array}{cccc}
0 & 0 & 0 & q \\
0 & 1 & 0 & 0 \\
0 & 0 & 1 & 0 \\
q & 0 & 0 & 0%
\end{array}%
\right) ,
\end{equation*}%
Then $C$ is unitary solution of (\ref{braideq}) and $C^{2}\neq 1$.\textrm{\ }
\textrm{\ }

\begin{proposition}
$\mathrm{dim}~A_{N}^{\pi }=2$ for any $N\in \mathbb{N}$.
\end{proposition}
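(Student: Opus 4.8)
The plan is to pin down $A_{N}^{\pi }$ explicitly in the standard tensor basis of $A_{N}=(\mathbb{C}^{2})^{\otimes N}$. Write $\{e_{1},e_{2}\}$ for the standard basis of $\mathbb{C}^{2}$ and index the tensor basis of $A_{N}$ by strings $\varepsilon =(\varepsilon _{1},\dots ,\varepsilon _{N})\in \{1,2\}^{N}$, setting $e_{\varepsilon }:=e_{\varepsilon _{1}}\otimes \cdots \otimes e_{\varepsilon _{N}}$. The first step is to read off the action of $C$ on $\mathbb{C}^{2}\otimes \mathbb{C}^{2}$ from the given matrix: in the ordered basis $(e_{1}\otimes e_{1},\,e_{1}\otimes e_{2},\,e_{2}\otimes e_{1},\,e_{2}\otimes e_{2})$ one has $C(e_{1}\otimes e_{2})=e_{1}\otimes e_{2}$ and $C(e_{2}\otimes e_{1})=e_{2}\otimes e_{1}$, while $C(e_{1}\otimes e_{1})=q\,e_{2}\otimes e_{2}$ and $C(e_{2}\otimes e_{2})=q\,e_{1}\otimes e_{1}$. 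Hence $C$ fixes the two ``mixed'' vectors and interchanges the two ``matched'' vectors up to the scalar $q$, so that $C^{2}=\mathrm{diag}(q^{2},1,1,q^{2})$ in this basis. Since $|q|=1$ and $q^{2}\neq 1$, the operator $C^{2}$ has eigenvalue $1$ exactly on the span of $e_{1}\otimes e_{2}$ and $e_{2}\otimes e_{1}$.

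For the upper bound I would exploit only the diagonal generators. Each $b_{i}^{2}$ is a pure braid (indeed $x_{i,i}=b_{i}^{2}$ in the Artin normal form), so every $a\in A_{N}^{\pi }$ satisfies $\pi (b_{i}^{2})a=a$ for all $i=1,\dots ,N-1$. Now $\pi (b_{i}^{2})$ acts as $C^{2}$ on the $i$-th and $(i+1)$-th factors, hence diagonally in the basis $\{e_{\varepsilon }\}$ with eigenvalue $q^{2}$ on $e_{\varepsilon }$ when $\varepsilon _{i}=\varepsilon _{i+1}$ and eigenvalue $1$ otherwise. Because $q^{2}\neq 1$, invariance under all the $\pi (b_{i}^{2})$ forces the coefficient of $e_{\varepsilon }$ to vanish whenever $\varepsilon _{i}=\varepsilon _{i+1}$ for some $i$; the only surviving basis vectors are the two alternating strings $(1,2,1,2,\dots )$ and $(2,1,2,1,\dots )$. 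This already yields $\dim A_{N}^{\pi }\leq 2$.

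It remains to show that these two alternating vectors actually lie in $A_{N}^{\pi }$, which gives the matching lower bound. Denote them $v_{1}=e_{1}\otimes e_{2}\otimes e_{1}\otimes \cdots$ and $v_{2}=e_{2}\otimes e_{1}\otimes e_{2}\otimes \cdots $. In either vector every adjacent pair $(\varepsilon _{i},\varepsilon _{i+1})$ equals $(1,2)$ or $(2,1)$, and these are precisely the vectors fixed by $C$ computed in the first step; consequently $\pi (b_{i})v_{k}=v_{k}$ for every $i$ and every $k=1,2$. Thus $v_{1},v_{2}$ are fixed by all generators of $B_{N}$, hence by the whole braid group and a fortiori by $B_{N}^{pure}$, so $v_{1},v_{2}\in A_{N}^{\pi }$ and $\dim A_{N}^{\pi }\geq 2$. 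Combining the two bounds gives $\dim A_{N}^{\pi }=2$ for every $N$. I do not anticipate a genuine obstacle here: the entire argument reduces to the elementary observation that $C$ fixes the mixed pairs while $C^{2}$ acts nontrivially on the matched pairs. The only points requiring care are that the upper bound uses just the squares $b_{i}^{2}$ rather than all pure-braid generators $x_{i,j}$, and that the lower bound in fact produces $B_{N}$-invariant (not merely $B_{N}^{pure}$-invariant) vectors, which is consistent with part (i) of Lemma \ref{invarspace}.
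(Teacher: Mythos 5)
Your proof is correct and is essentially the paper's own argument: the paper disposes of this proposition with the single phrase ``a direct calculation using formula (\ref{braideq1})'', and your write-up is exactly that calculation carried out in full --- the pure braids $b_i^2$ act via $C^2=\mathrm{diag}(q^2,1,1,q^2)$, so invariance kills every basis string with two equal adjacent indices, and the two alternating strings that survive are fixed even by all of $\pi(B_N)$ since $C$ fixes the mixed pairs. Your identification of $A_N^{\pi}$ as the span of the two alternating tensors also agrees with the paper's subsequent description of the braided $N$-particle space, whose two nonzero components $f_{a_{N-1}},f_{a_N}$ are indexed by precisely these strings.
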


\begin{proof}
A direct calculation using formula (\ref{braideq1}).
\end{proof}

We can apply the results of Section \ref{sec3} and compute the braided $%
L^{2} $-Betti numbers $\mathbf{b}_{m}(X^{N})$ of a connected cocompact
Riemannian manifold $X$. Formula (\ref{betti0}) implies that $C_{N}^{\pi }=%
\frac{1}{2^{N-1}N!}$. Therefore%
\begin{equation}
\mathbf{b}_{m}(X^{N})=\frac{1}{2^{N-1}N!}\sum_{m_{1}+\dots +m_{N}=m}\beta
_{m_{1}}\dots \beta _{m_{N}},  \label{Bettiex1}
\end{equation}%
where $\beta _{i}$, $i=0,1,2$, are the $L^{2}$-Betti numbers of $X$.

It this example, we can give an explicit description of the braided $N$%
-particle space. A direct calculation shows that the only non-zero
components of $f=\left( f_{1},\dots ,f_{2^{N}}\right) \in \left( H\otimes
V\right) _{U}$ are $f_{a_{N-1}}$ and $f_{a_{N}}$, where the sequence $a_{N}$
is defined recursively as follows: $a_{1}=2$ and $a_{N}=2^{N}-a_{N-1}+1$.
Moreover $f_{a_{N-1}}$ and $f_{a_{N}}$ are symmetric elements, so that $%
\left( H\otimes V\right) _{U}=H_{0}^{\hat{\otimes}N}\otimes \mathbb{C}^{2}$,
where $\hat{\otimes}$ denotes the symmetric tensor product.

\bigskip\noindent
\textbf{Example 3.}\label{dimN} Let $q\in \mathbb{C}$ be such that $|q|=1$ and $q^{2}\neq 1$. Set
\begin{equation*}
C=\left(
\begin{array}{cccc}
q & 0 & 0 & 0 \\
0 & 0 & 1 & 0 \\
0 & 1 & 0 & 0 \\
0 & 0 & 0 & 1%
\end{array}%
\right) ,
\end{equation*}%
Then $C$ is unitary solution of (\ref{braideq}) and $C^{2}\neq 1$.\textrm{\ }

\begin{proposition}
\textrm{\label{kill1} $dim\ A_{N}^{\pi }=N+1$. }
\end{proposition}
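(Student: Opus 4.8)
The plan is to show that $A_N^\pi$ is exactly the span of those basis configurations carrying at most one ``$0$'', a space of dimension $1+N=N+1$. Write $e_0,e_1$ for the standard basis of $\mathbb{C}^2$ and, for $s=(s_1,\dots,s_N)\in\{0,1\}^N$, put $e_s=e_{s_1}\otimes\cdots\otimes e_{s_N}$ and $z(s)=\#\{j:s_j=0\}$. Reading off (\ref{braideq1}) for Example 3, the matrix $C$ sends $e_0\otimes e_0\mapsto q\,e_0\otimes e_0$, interchanges $e_0\otimes e_1$ and $e_1\otimes e_0$, and fixes $e_1\otimes e_1$; in particular $C$ preserves the number of zeros, so $\pi$ preserves the grading $A_N=\bigoplus_{r=0}^N A_N^{(r)}$, where $A_N^{(r)}=\mathrm{span}\{e_s:z(s)=r\}$. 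First I would prove the lower bound $\mathrm{span}\{e_s:z(s)\le1\}\subseteq A_N^\pi$, and then the matching upper bound by induction on $N$.

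For the lower bound, observe that on $W:=A_N^{(0)}\oplus A_N^{(1)}$ a configuration never presents a pair $e_0\otimes e_0$ on two adjacent sites, so each $\pi(b_i)$ preserves $W$ and acts on it with no phase, simply transposing the (at most one) zero between sites $i$ and $i+1$. Hence $\pi\upharpoonright W$ is the permutation representation, factoring through $\phi:B_N\to S_N$; since $B_N^{pure}=\mathrm{Ker}\,\phi$, it acts as the identity on $W$, giving $W\subseteq A_N^\pi$ and $\dim A_N^\pi\ge N+1$.

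For the upper bound I would argue inductively, the hypothesis being $A_n^\pi=\mathrm{span}\{e_s:z(s)\le1\}$ for $n<N$. Since $\pi(b_i)=\pi_{N-1}(b_i)\otimes 1$ for $i\le N-2$, the subgroup $B_{N-1}^{pure}$ (generated by the $x_{i,j}$ with $j\le N-2$, which are words in $b_1,\dots,b_{N-2}$) acts as $\pi_{N-1}\upharpoonright B_{N-1}^{pure}\otimes 1$ on $A_N=A_{N-1}\otimes\mathbb{C}^2$, so its invariants are $A_{N-1}^\pi\otimes\mathbb{C}^2$. By the induction hypothesis this space is spanned by all $e_s$ with at most one zero together with the $N-1$ configurations $w_k$ ($1\le k\le N-1$) carrying zeros exactly at sites $k$ and $N$. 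Because $B_N^{pure}=F_{N-1}\rtimes B_{N-1}^{pure}$, it now suffices to show that no nonzero vector of $\mathrm{span}\{w_1,\dots,w_{N-1}\}$ is invariant under $F_{N-1}=\langle x_{1,N-1},\dots,x_{N-1,N-1}\rangle$.

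The heart of the argument is the action of these generators. Since $C^2=1+(q^2-1)P_{00}$ on the relevant two sites (with $P_{00}$ the projection onto $e_0\otimes e_0$), writing $x_{i,N-1}=g\,b_i^2\,g^{-1}$ with $g=b_{N-1}\cdots b_{i+1}$ gives $\pi(x_{i,N-1})=1+(q^2-1)\,\pi(g)P_{00}^{i,i+1}\pi(g)^{-1}$, so (as $q^2\ne1$) invariance of $v=\sum_k c_k w_k$ under $x_{i,N-1}$ is equivalent to $\pi(g)^{-1}v$ having no $e_0\otimes e_0$ on sites $i,i+1$. The generator $x_{N-1,N-1}=b_{N-1}^2$ scales $w_{N-1}$ by $q^2$, forcing $c_{N-1}=0$. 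For $i\le N-2$ the operator $\pi(g)^{-1}=\pi(b_{i+1})^{-1}\cdots\pi(b_{N-1})^{-1}$ drags the zero sitting at site $N$ leftwards, and a direct trace shows that $\pi(g)^{-1}w_k$ equals the single configuration with zeros at $\{k,i+1\}$ if $k<i$, at $\{i,i+1\}$ if $k=i$, and (up to a factor $q^{-1}$) at $\{i+1,k+1\}$ if $i<k\le N-2$. Only the middle case places a zero pair on sites $i,i+1$, and with coefficient $1$, so the $e_0\otimes e_0$-component at $(i,i+1)$ of $\pi(g)^{-1}v$ equals $c_i$ times that configuration; invariance forces $c_i=0$ for every $i\le N-2$. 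Hence all $c_k=0$, no new invariants survive, and $\dim A_N^\pi=N+1$. I expect the bookkeeping of this dragging computation --- verifying the three collision outcomes above and that a zero pair lands on sites $i,i+1$ \emph{exactly} when $k=i$ --- to be the main obstacle; the base cases $N=1,2$ are immediate from $C^2=\mathrm{diag}(q^2,1,1,1)$.
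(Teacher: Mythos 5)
Your proof is correct, and its computational heart is exactly the calculation that drives the paper's own proof: conjugating $b_i^2$ by $g=b_{N-1}\cdots b_{i+1}$, dragging the zero sitting at site $N$ leftwards, and picking up a factor $q^{2}$ precisely when it collides with another zero. The scaffolding around that computation, however, is genuinely different. The paper argues entirely at the level of basis vectors $e_I$: it checks directly that $e_{1\dots 1}$ and $e_{1\dots 10}$ are invariant, shows that an invariant $e_I$ ending in $0$ must be $e_{1\dots 10}$, proves the lemma that $e_I\in A_N^{\pi}$ if and only if $e_{I1}\in A_{N+1}^{\pi}$, and concludes by induction. That route is shorter, but it silently uses the fact that $A_N^{\pi}$ is spanned by the basis vectors it contains --- true here because every $\pi(x_{i,j})$ acts \emph{diagonally} in the standard basis (the representation is monomial and pure braids have trivial underlying permutation), yet this reduction is never stated in the paper. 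Your argument does not need it: the identity $\pi(x_{i,N-1})=1+(q^{2}-1)\,\pi(g)P_{00}^{i,i+1}\pi(g)^{-1}$ lets you treat arbitrary linear combinations $\sum_k c_k w_k$ at once, and your use of the semidirect product $B_N^{pure}=F_{N-1}\rtimes B_{N-1}^{pure}$, together with the identification of the $B_{N-1}^{pure}$-invariants in $A_{N-1}\otimes\mathbb{C}^{2}$ as $A_{N-1}^{\pi}\otimes\mathbb{C}^{2}$, replaces the paper's append-a-$1$ lemma as the induction step. Your lower bound is also more conceptual: rather than verifying two specific vectors, you observe that on the at-most-one-zero subspace $W$ the representation never meets the $q$-eigenvalue, hence factors through $\phi:B_N\to S_N$ and is trivial on $B_N^{pure}=\mathrm{Ker}\,\phi$. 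The net trade-off: your version is longer but self-contained (all linear-algebra reductions are justified), while the paper's is more economical at the price of the unstated, though easily supplied, diagonality fact.
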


\begin{proof}
Let $e_{0}=(1,0)^{t}$, $e_{1}=(0,1)^{t}$ be the canonical basis in $\mathbb{C%
}^{2}$. Denote by $e_{i_{1}\dots i_{n}}=e_{i_{1}}\otimes \dots \otimes
e_{i_{N}}$ the corresponding basis in $\mathbb{C}^{2^{N}}$.

Consider first the case of $N=2$. We have $B_{2}=\left\{ b\right\} $ and $%
\pi (b)e_{00}=qe_{00}$, $\pi (b)e_{01}=e_{10}$, $\pi (b)e_{10}=e_{01}$, $\pi
(b)e_{11}=e_{11}$. Therefore the basis of $A_{N}^{\pi }$ consists of $e_{01}$%
, $e_{10}$ and $e_{11}$, and $\mathrm{dim}A_{N}^{\pi }=3$.

A direct check shows that $e_{\underset{N}{\underbrace{11\dots 11}}}$ and $%
e_{\underset{N}{\underbrace{11\dots 10}}}$ belong to $A_{N}^{\pi }$. Let us
prove that for any multiindex $I=i_{1}\dots i_{N}$, $i_{k}\in \{0,1\}$, such
that $e_{I}\in A_{N}^{\pi }$ the equality $i_{N}=0$ implies that $%
i_{1}=\dots i_{N-1}=1$. Indeed, let $i_{N}=0$ and $k:=\mathrm{max}_{j\in
\{1,2,\dots ,N-1\}}\{i_{j}=0\}>0$. Then it is easy to see that
\begin{eqnarray*}
e_{I}=\pi (x_{k,N-1})e_{I} &=&\pi (b_{N-1}b_{N-2}\dots
b_{k+1}b_{k}^{2}b_{k+1}^{-1}\dots b_{N-2}^{-1}b_{N-1}^{-1})e_{I} \\
&=&\pi (b_{N-1}b_{N-2}\dots b_{k+1}b_{k}^{2})e_{i_{1}\dots i_{k-1}001\dots
1}=q^{2}e_{I},
\end{eqnarray*}%
because the space $A_{N}^{\pi }$ is invariant under the representation  $\pi$
and $\pi (b_{k}^{2})e_{i_{1}\dots i_{k-1}001\dots
1}=q^{2}e_{i_{1}\dots i_{k-1}001\dots 1}$ (recall that $\pi (b_{i})=1\otimes
\dots \otimes 1\otimes C\otimes 1\otimes \dots \otimes 1$). Therefore $%
e_{I}=0$.

Let us prove that $e_{I}\in A_{N}^{\pi }$ iff $e_{I1}\in A_{N+1}^{\pi }$.
Indeed, let $e_{I}\in A_{N}^{\pi }$. To prove that $e_{I1}\in A_{N+1}^{\pi }$
it suffice to check that $\pi (x_{k,N})e_{I1}=e_{I1}$ for all $k\leq N$. If $%
i_{N}=1$ we have $\pi (x_{k,N})e_{I1}=\pi (b_{N}x_{N-1,k})e_{I1}=\pi
(b_{N})e_{I1}=e_{I1}$ since $\pi (x_{k,N-1})e_{I}=e_{I}$. If $i_{N}=0$ then
by previous statement we have that $i_{1}=\dots =i_{N-1}=1$ and $\pi
(x_{k,N})e_{I1}=\pi (b_{N}x_{k,N-1})e_{11\dots 10}=\pi (b_{N})e_{11\dots
10}=e_{11\dots 101}=e_{I1}$.

The equalities $\pi (x_{i,j})e_{I1}=e_{I1}$, $i\leq j\leq N-1$ and the fact
that the operators $\pi (x_{i,j})$ are acting only in first $N-1$ indexes
imply that $e_{I}\in A_{N}^{\pi }$ provided $e_{I1}\in A_{N+1}^{\pi }$. This
statement combined with the induction arguments and the fact that $e_{%
\underset{N+1}{\underbrace{11\dots 10}}}\in A_{N+1}^{\pi }$\ complete the
proof.
\end{proof}

It follows from Proposition \ref{kill1} that $C_{N}^{\pi }=\frac{N+1}{2^{N}N!%
}$. Therefore the braided $L^{2}$-Betti numbers $\mathbf{b}_{m}(X^{N})$ of $%
X $ have the following form:
\begin{equation}
\mathbf{b}_{m}(X^{N})=\frac{N+1}{2^{N}N!}\sum_{m_{1}+\dots +m_{N}=m}\beta
_{m_{1}}\dots \beta _{m_{N}},  \label{Bettiex2}
\end{equation}%
where $\beta _{i}$, $i=0,1,2$, are the $L^{2}$-Betti numbers of $X$.

The structure of the corresponding braided $N$-particle space is more
complicated then is previous example. It is not hard to see that the
representation $\tilde{\pi}$ of the symmetric group in the space $A_{N}^{\pi
}$ that corresponds to the representation $\pi $ can be decomposed into a
direct sum of two cyclic representations $\tilde{\pi}_{1}$ and $\tilde{\pi}%
_{2}$. Representation $\tilde{\pi}_{1}$ is trivial and acts in
one-dimensional space generated by the basis element $e_{11\dots 1}$ while
representation $\tilde{\pi}_{2}$ permutetes the other $N$ basis elements in $%
A_{N}^{\pi }$. Therefore the space $H^{\otimes _{\pi }N}$ can be decomposed
into the direct sum of the space $L^{2}(X)^{\hat{\otimes}N}$ of symmetric
functions and the space $H^{\otimes _{\tilde{\pi}_{2}}N}$ of elements from $%
L^{2}(X^{N})\otimes \mathbb{C}^{N}$ that are invariant under representation $%
\tilde{U}_{N}$ of the group $S_{N}$ given by the relation
\begin{equation*}
(\tilde{U}_{N}(\sigma _{i})f)(x_{1},\dots ,x_{i},x_{i+1},\dots x_{N})=\tilde{%
\pi}_{2}(\sigma _{i})f(x_{1},\dots ,x_{i+1},x_{i},\dots ,x_{N}),
\end{equation*}%
where $\sigma _{1},\dots ,\sigma _{N-1}$ are generating elements of the
group $S_{N}$.

\bigskip\noindent
\textbf{Example 4.}\label{dim0} Let $q\in \mathbb{C}$ be such that $|q|=1$ and $q^{2}\neq 1$
\begin{equation*}
C=\left(
\begin{array}{cccc}
0 & 0 & 0 & 1 \\
0 & q & 0 & 0 \\
0 & 0 & q & 0 \\
1 & 0 & 0 & 0%
\end{array}%
\right) .
\end{equation*}%
Then $C$ is unitary solution of (\ref%
{braideq}) and $C^{2}\neq 1$.\textrm{\ }

\begin{proposition}
$\mathrm{dim}\ A_{N}^{\pi }=0$.
\end{proposition}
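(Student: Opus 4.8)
The plan is to cut $A_{N}^{\pi}$ down to a two-dimensional subspace using the squares $\pi(b_{i}^{2})$, and then observe that this subspace cannot survive the action of $\pi(b_{1})$. First I would record the action of $C$ on the ordered basis $e_{00},e_{01},e_{10},e_{11}$ of $\mathbb{C}^{2}\otimes\mathbb{C}^{2}$: one reads off $Ce_{00}=e_{11}$, $Ce_{11}=e_{00}$, $Ce_{01}=qe_{01}$, $Ce_{10}=qe_{10}$, whence $C^{2}=\mathrm{diag}(1,q^{2},q^{2},1)$ is \emph{diagonal}. Consequently each $\pi(b_{i}^{2})=1\otimes\cdots\otimes C^{2}\otimes\cdots\otimes 1$ is diagonal in the tensor basis $\{e_{I}\}$, multiplying $e_{i_{1}\cdots i_{N}}$ by $1$ when $i_{i}=i_{i+1}$ and by $q^{2}$ when $i_{i}\neq i_{i+1}$.

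Since $b_{i}^{2}=x_{i,i}\in B_{N}^{pure}$, any $v\in A_{N}^{\pi}$ is fixed by every $\pi(b_{i}^{2})$, so $A_{N}^{\pi}\subseteq\bigcap_{i=1}^{N-1}\ker(\pi(b_{i}^{2})-1)$. Because these operators are simultaneously diagonal and $q^{2}\neq 1$, their common $1$-eigenspace consists exactly of the $e_{I}$ with $i_{i}=i_{i+1}$ for all $i$, i.e.\ of the two ``constant'' vectors. Hence
\[
A_{N}^{\pi}\subseteq W:=\mathrm{span}\bigl(e_{0\cdots 0},\,e_{1\cdots 1}\bigr).
\]

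To finish, I would invoke Lemma \ref{invarspace}(i): $A_{N}^{\pi}$ is invariant under the full representation $\pi$ of $B_{N}$, in particular under $\pi(b_{1})$. A direct computation gives $\pi(b_{1})e_{0\cdots 0}=e_{110\cdots 0}$ and $\pi(b_{1})e_{1\cdots 1}=e_{001\cdots 1}$ (apply $C$ to the first two slots). For $N\ge 3$ these two vectors, together with $e_{0\cdots 0}$ and $e_{1\cdots 1}$, are four \emph{distinct} basis vectors, so $\pi(b_{1})W\cap W=\{0\}$. Thus for $v\in A_{N}^{\pi}\subseteq W$ the image $\pi(b_{1})v$ lies in $A_{N}^{\pi}\subseteq W$ as well as in $\pi(b_{1})W$, hence in $W\cap\pi(b_{1})W=\{0\}$; since $\pi(b_{1})$ is invertible this forces $v=0$. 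Therefore $A_{N}^{\pi}=\{0\}$ for $N\ge 3$.

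The argument is short once the diagonality of $C^{2}$ is noticed, so I expect no serious obstacle; the one point demanding care is the range of $N$. For $N=1,2$ the group $B_{N}^{pure}$ is generated by the squares $b_{i}^{2}$ alone, so the reduction of the first step is already the full answer and $\dim A_{N}^{\pi}=2$; the vanishing is genuinely an $N\ge 3$ phenomenon, driven by $\pi(b_{1})$ pushing the constant vectors off the diagonal locus. An equivalent route that avoids Lemma \ref{invarspace} is to evaluate the single pure-braid element $x_{1,2}=b_{2}b_{1}^{2}b_{2}^{-1}$ on $W$: using $C^{-1}e_{11}=e_{00}$, $C^{2}e_{10}=q^{2}e_{10}$ and $Ce_{00}=e_{11}$ one finds $\pi(x_{1,2})$ acts on $W$ as the scalar $q^{2}\neq 1$, which again forces $A_{N}^{\pi}=\{0\}$.
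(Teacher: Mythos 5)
Your proof is correct, and its core is the same as the paper's: both arguments begin by noting that $C^{2}=\mathrm{diag}(1,q^{2},q^{2},1)$ is diagonal, so the operators $\pi(b_{k}^{2})$ (which lie in $\pi(B_{N}^{pure})$) multiply $e_{i_{1}\dots i_{N}}$ by $q^{2}\neq 1$ whenever $i_{k}\neq i_{k+1}$, which cuts $A_{N}^{\pi}$ down to $W=\mathrm{span}(e_{0\cdots 0},e_{1\cdots 1})$. The finishes differ: the paper evaluates the single pure braid $x_{1,2}=b_{2}b_{1}^{2}b_{2}^{-1}$ on $W$ and finds it acts as the scalar $q^{2}\neq 1$, exactly the "equivalent route" you sketch at the end; your primary finish instead invokes Lemma \ref{invarspace}(i) (invariance of $A_{N}^{\pi}$ under all of $\pi(B_{N})$) together with the disjointness $\pi(b_{1})W\cap W=\{0\}$, which is equally valid and avoids any computation with pure-braid generators at the cost of leaning on that lemma. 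The genuinely valuable difference is your attention to the range of $N$: both finishes require $N\geq 3$ (the element $x_{1,2}$ does not exist in $B_{2}^{pure}$, and your four basis vectors are not distinct when $N=2$), and indeed for $N=1,2$ the pure braid group is generated by the squares $b_{i}^{2}$ alone, so $A_{2}^{\pi}=\mathrm{span}(e_{00},e_{11})$ has dimension $2$. The paper states the proposition with no restriction on $N$, so as written its claim (and the ensuing conclusion $\mathbf{b}_{k}(X^{N})=0$) fails at $N=2$; your caveat is a genuine correction, not merely a stylistic variant.
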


\begin{proof}
Similar to the Example 2 we can show that $\pi (b_{k}^{2})e_{i_{1}\dots
i_{k}i_{k+1}\dots i_{N}}=q^{2}e_{i_{1}\dots i_{N}}$ iff $i_{k}i_{k+1}=01$ or
$i_{k}i_{k+1}=10$. Therefore any vector $v\in A_{N}^{\pi }$ should have the
form $v=\alpha e_{00\dots 0}+\beta e_{11\dots 1}$. On the other hand, $\pi
(x_{1,2})e_{00\dots 0}=q^{2}e_{00\dots 0}$ and $\pi (x_{1,2})e_{11\dots
1}=q^{2}e_{11\dots 1}$, so that $\pi (x_{1,2})v=q^{2}v$, which implies that $%
v=0$. Therefore $\mathrm{dim}\ A_{N}^{\pi }=0$.
\end{proof}

Thus, $\mathbf{b}_{k}(X^{N})=0$.

\section{Fock space of braid-invariant harmonic forms: $L^{2}$-dimensions
and index}

\label{sec5}

Let us consider the infinite disjoint union
\begin{equation*}
\mathfrak{X=}\bigsqcup\limits_{N=1}^{\infty }X^{N}.
\end{equation*}%
The space $L^{2}\Omega ^{m}(\mathfrak{X})$ of square-integrable $m$-forms on
$\mathfrak{X}$ has the form%
\begin{equation}
L^{2}\Omega ^{m}(\mathfrak{X})=\bigoplus\limits_{N}L^{2}\Omega ^{m}(X^{N}).
\label{forms-decomp}
\end{equation}%
We can define the Hodge-de Rham Laplacian $H_{\mathfrak{X}}^{(m)}$ on $%
L^{2}\Omega ^{m}(\mathfrak{X})$ component-wise, that is,
\begin{equation*}
H_{\mathfrak{X}}^{(m)}:=\sum_{N}H_{X^{N}}^{(m)}.
\end{equation*}%
For the corresponding spaces of harmonic forms we obviously have the
decomposition%
\begin{equation}
\mathcal{H}^{m}(\mathfrak{X}):=\mathrm{Ker}~H_{\mathfrak{X}%
}^{(m)}=\bigoplus\limits_{N}\mathcal{H}^{m}(X^{N}).  \label{harmonic-decomp}
\end{equation}%
Formulae (\ref{forms-decomp}) and (\ref{harmonic-decomp}) motivate us to
\textit{define} the space
\begin{equation*}
L^{2}\Omega _{U}^{m}(\mathfrak{X,}A):=\bigoplus\limits_{N}^{{}}L^{2}\Omega
_{U}^{m}(X^{N},A_{N}),
\end{equation*}%
which can be regarded as the space of "$m$-\textit{forms on the braided
configuration space}". Similar to the case of the de Rham complex (\ref%
{forms-decomp}), the Hodge-de Rham Laplacian $\mathbf{H}^{(m)}$ on $L_{\pi
}^{2}\Omega ^{m}(\mathfrak{X})$ can be defined component-wise,%
\begin{equation*}
\mathbf{H}^{(m)}:=\sum_{N}\mathbf{H}^{(m,N)}.
\end{equation*}%
The corresponding space of $U$-invariant harmonic $m$-forms can be decomposed into the
direct sum%
\begin{equation*}
\mathcal{H}_{U}^{m}(\mathfrak{X,}A):=\mathrm{Ker}~\mathbf{H}%
^{(m)}=\bigoplus\limits_{N}\mathcal{H}_{U}^{m}(X^{N},A_{N}).
\end{equation*}%
Let $\mathbf{P:}=\sum_{m}\mathbf{P}^{(m)}$, where $\mathbf{P}^{(m)}$ is the
projection onto $\mathcal{H}_{U}^{m}(\mathfrak{X,}A)$. Moreover, we have the equality $\mathbf{P}%
^{(m)}=\sum_{N}\mathbf{P}^{(m,N)}$, which implies that $\mathbf{P}^{(m)}$  an element of the von Neumann
algebra $Q_{P}^{(m)}:=\bigoplus\limits_{N}Q_{P}^{(m,N)}$.

We introduce a regularized dimension $\mathbf{b}_{m}(\mathfrak{X)}$ of the
space $\mathcal{H}_{U}^{m}(\mathfrak{X,}A)$ by the formula%
\begin{equation*}
\mathbf{b}_{m}(\mathfrak{X)}=\sum_{N=0}^{\infty }\mathbf{b}_{m}^{(N)}=%
\mathrm{Tr}_{Q_{P}^{(m)}}\mathbb{~}\mathbf{P}^{(m)}
\end{equation*}%
and define the \textit{supertrace}
\begin{equation}
\mathbb{STR~}\mathbf{P=}\sum_{m=0}^{\infty }(-1)^{m}\mathbf{b}_{m}(\mathfrak{%
X}).  \label{ind}
\end{equation}%
Here we use the convention $\mathbf{b}_{0}=1$.

\begin{theorem}
\label{index-theorem} 
The series in the right-hand side of (\ref{ind}) converges absolutely and%
\begin{equation*}
\mathbb{STR~}\mathbf{P}=\sum_{N}C_{N}^{\pi }\chi (K)^{N}<\infty ,
\end{equation*}%
where $\chi (K)$ is the Euler characteristic of $K$.
\end{theorem}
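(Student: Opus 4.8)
The plan is to reduce the supertrace computation to an Euler-characteristic identity by exploiting the product structure already visible in the braided Betti number formula~(\ref{betti1}). First I would substitute the explicit trace formula into the definition of $\mathbb{STR}~\mathbf{P}$. By Theorem~\ref{trHarm} (equation~(\ref{betti1})) we have $\mathbf{b}_m(X^N)=C_N^\pi\,\mathrm{Tr}_{M^{(m)}}(P_{Har}^{(m,N)})$, and the constant $C_N^\pi$ does not depend on the form-degree $m$. Therefore, after interchanging the order of summation over $m$ and $N$ (which is where convergence must be justified), one gets
\begin{equation*}
\mathbb{STR}~\mathbf{P}=\sum_{m=0}^\infty(-1)^m\sum_N C_N^\pi\,\mathrm{Tr}_{M^{(m)}}(P_{Har}^{(m,N)})=\sum_N C_N^\pi\sum_{m=0}^\infty(-1)^m\beta_m(X^N),
\end{equation*}
using the Künneth formula~(\ref{XN-Betti}) to identify $\mathrm{Tr}_{M^{(m)}}(P_{Har}^{(m,N)})$ with the ordinary $L^2$-Betti number $\beta_m(X^N)$.

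Next I would apply the $L^2$-index theorem recalled in Section~\ref{setting}, namely $\sum_m(-1)^m\beta_m(X^N)=\chi(X^N/G^N)=\chi(K^N)$, together with the multiplicativity of the Euler characteristic under products, $\chi(K^N)=\chi(K)^N$. This collapses the inner alternating sum to $\chi(K)^N$, giving the desired identity
\begin{equation*}
\mathbb{STR}~\mathbf{P}=\sum_N C_N^\pi\,\chi(K)^N.
\end{equation*}
The multiplicativity $\chi(K^N)=\chi(K)^N$ follows directly from the product form of~(\ref{XN-Betti}): summing $(-1)^{m_1+\dots+m_N}\beta_{m_1}\cdots\beta_{m_N}$ over all $m_1,\dots,m_N$ factorizes as $\left(\sum_j(-1)^j\beta_j\right)^N=\chi(K)^N$.

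The main obstacle, and the step requiring genuine care, is justifying the absolute convergence of the double series and the interchange of summations. For this I would bound $C_N^\pi\le 1/N!$ by~(\ref{betti0}) and control the degree-summed Betti numbers: the total $L^2$-Betti number of $K^N$ is bounded by $\left(\sum_{j}\beta_j(K)\right)^N=B^N$ where $B:=\sum_j\beta_j(X)$ is a fixed finite constant (each $\beta_j$ is finite by ellipticity, as noted in Section~\ref{setting}). Hence the absolute value of the general term is dominated by $\frac{1}{N!}B^N$, and $\sum_N B^N/N!=e^{B}<\infty$, so the double series converges absolutely by comparison with the exponential series. This absolute convergence simultaneously legitimizes the reordering of the sums over $m$ and $N$ performed above and guarantees the finiteness asserted in the statement. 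Finally, I would note that the convention $\mathbf{b}_0=1$ fixes the $N=0$ (empty-product) contribution consistently with $\chi(K)^0=1$, so no boundary term is lost.
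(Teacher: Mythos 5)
Your proposal is correct and takes essentially the same route as the paper's proof: bound $C_N^{\pi}\le 1/N!$ via (\ref{betti0}), dominate the double series by $\sum_N \frac{1}{N!}\bigl(\sum_k\beta_k\bigr)^N$ to get absolute convergence, interchange the sums over $m$ and $N$, factorize the alternating sum over $k_1,\dots,k_N$ into $\bigl(\sum_k(-1)^k\beta_k\bigr)^N$, and invoke the $L^2$-index theorem $\sum_m(-1)^m\beta_m=\chi(K)$. Your intermediate phrasing through $\chi(K^N)$ is just a repackaging of the same factorization the paper performs directly, so there is no substantive difference.
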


\begin{proof}
Formula (\ref{betti0}) shows that $C_{N}^{\pi }\leq \frac{1}{N!}.$ Then
\begin{eqnarray*}
\sum_{m=0}^{\infty }\mathbf{b}_{m}(\mathfrak{X)} &\leq &\sum_{m=0}^{\infty
}\sum_{N=0}^{\infty }\frac{1}{N!}\sum_{k_{1}+k_{2}+...+k_{N}=m}\beta
_{k_{1}}\dots \beta _{k_{N}} \\
&=&\sum_{N=0}^{\infty }\frac{1}{N!}\left( \sum_{k}\beta _{k}\right)
^{N}<\infty .
\end{eqnarray*}%
Then
\begin{eqnarray*}
\mathbb{STR~}\mathbf{P} &=&\sum_{m=0}^{\infty }\left( -1\right) ^{n}\mathbf{b%
}_{m}(\mathfrak{X)} \\
&=&\sum_{m=0}^{\infty }\left( -1\right) ^{m}\sum_{N}C_{N}^{\pi
}\sum_{k_{1}+k_{2}+...+k_{N}=m}\beta _{k_{1}}\dots \beta _{k_{N}} \\
&=&\sum_{N}C_{N}^{\pi }\sum_{k_{1},k_{2},...,k_{N}}\left( -1\right)
^{k_{1}}\beta _{k_{1}}...\left( -1\right) ^{k_{N}}\beta _{k_{N}} \\
&=&\sum_{N}C_{N}^{\pi }\left( \sum_{k}\left( -1\right) ^{k}\beta _{k}\right)
^{N} \\
&=&\sum_{N}C_{N}^{\pi }\chi (K)^{N}
\end{eqnarray*}%
because of the equality $\sum_{m}\left( -1\right) ^{m}\beta _{m}=\chi (K)$
(the $L^{2}$ index theorem \cite{Ati}).
\end{proof}

Let us consider the examples of the previous section. In all these examples we have
$A_{N}=\left( \mathbb{C}^{2}\right) ^{\otimes N}$ so that $\dim A_{N}=2^{N}$%
. Thus, we can apply Theorem \ref{index-theorem} and compute $\mathbb{STR~}%
\mathbf{P}_{\pi }$.

\bigskip\noindent
\textbf{Example 1 }(revisited). $C_{N}^{\pi }=\frac{1}{N!}$ and $\mathbb{STR~%
}\mathbf{P}=e^{-\chi (K)}$. The latter expression coincides with the formula
derived in \cite{AD} for the case the de Rham complex over the configuration
space equipped with the Poisson measure.

\bigskip\noindent
\textbf{Example 2\ }(revisited). $C_{N}^{\pi }=\frac{1}{2^{N-1}N!}.$ Then%
\begin{equation*}
\mathbb{STR~}\mathbf{P}=\sum_{N}(-1)^{N}C_{N}^{\pi }\chi
(K)^{N}=\sum_{N}(-1)^{N}\frac{2}{N!}\left( \frac{\chi (K)}{2}\right)
^{N}=2e^{-\frac{\chi (K)}{2}}.
\end{equation*}

\bigskip\noindent
\textbf{Example 3} (revisited). $C_{N}^{\pi }=\frac{N+1}{2^{N}N!}.$ Then%
\begin{eqnarray*}
\mathbb{STR~}\mathbf{P} &=&\sum_{N}(-1)^{N}C_{N}^{\pi }\chi (K)^{N} \\
&=&\sum_{N}(-1)^{N}\frac{N}{N!}\left( \frac{\chi (K)}{2}\right)
^{N}+\sum_{N}(-1)^{N}\frac{1}{N!}\left( \frac{\chi (K)}{2}\right) ^{N} \\
&=&-\frac{\chi (K)}{2}\sum_{N}(-1)^{N-1}\frac{1}{\left( N-1\right) !}\left(
\frac{\chi (K)}{2}\right) ^{N-1}+\sum_{N}(-1)^{N}\frac{1}{N!}\left( \frac{%
\chi (K)}{2}\right) ^{N} \\
&=&1-\frac{\chi (K)}{2}e^{-\frac{\chi (K)}{2}}+e^{-\frac{\chi (K)}{2}}.
\end{eqnarray*}

\begin{remark}
Under the additional assumption of $\mathrm{dim}X=2$, the only non-zero $%
L^{2}$-Betti number of $X$ is $\beta =\beta _{1}(X)$. Then, according to
formula (\ref{BettiBN}), the only non-zero braided $L^{2}$-Betti number of $%
X^{N}$ is $\mathbf{b}_{N}(X^{N})=C_{N}^{\pi }\beta ^{N}$, and therefore
\begin{equation*}
\mathbf{b}_{m}(\mathfrak{X})=C_{N}^{\pi }\beta ^{m},\ m=1,2,3,....
\end{equation*}
\end{remark}

\begin{remark}
The right-hand side of formula (\ref{ind}) can be understood as a
regularized index of the Dirac operator $D+D^{\ast }$, where $%
D:=\sum_{m}\sum_{N}\mathbf{d}_{N}^{m}$, see \cite{Ati} and e.g. \cite{Mat}
for the discussion of von Neumann supertraces in geometry and topology of
Riemannian manifolds and their relation to $L^{2}$-index theorems, and \cite%
{AD}, \cite{ADKal1}, \cite{ADKal2} for the extension of these notions to the
framework of infinite configuration spaces.
\end{remark}

\section*{Acknowledgments}\label{sec:Ack}

The authors would like to thank Sergio Albeverio, Yuri Kondratiev,
Eugene Lytvynov and Leonid Vainerman for
helpful discussions.

\bigskip

\section*{References}

\end{document}